\documentclass[10pt,twoside]{amsart}
\usepackage{float}
\usepackage{amssymb, amsmath, mathrsfs, mathtools, amsfonts, amsthm, amscd, yfonts}
\usepackage{xcolor}
\usepackage{microtype}
\usepackage{hyperref} 
\usepackage{multicol}
\usepackage{tikz-cd}
\usepackage{hyphenat}
\usepackage{lineno}
\theoremstyle{plain}
\newtheorem{Theorem}{Theorem}
\newtheorem{Proposition}[Theorem]{Proposition}
\newtheorem{Lemma}[Theorem]{Lemma}
\newtheorem{Corollary}[Theorem]{Corollary}
\newtheorem{Conjecture}[Theorem]{Conjecture}
\theoremstyle{definition}
\newtheorem{Definition}[Theorem]{Definition}
\theoremstyle{remark}
\newtheorem{Remark}[Theorem]{Remark}
\newtheorem{Example}[Theorem]{Example}

\newcommand{\cM}{\mathcal{M}}

\newcommand{\rR}{\mathscr{R}}

\title{QUANTUM REGULARITY of FINITE DIMENSIONAL
SEMISIMPLE ALGEBRAS}

\author[Y. Bahturin]{Yuri Bahturin$^{1,2}$}
\address{1. Department of Mathematics and Statistics, Memorial University of Newfoundland, St. John's, NL, Canada, A1C 5S7}
\email{bahturin@mun.ca}
\author[L. Centrone]{Lucio Centrone$^2$}
\address{2. Dipartimento di Matematica, Universit\`a degli Studi di Bari Aldo Moro, Via Edoardo Orabona, 4, 70125 Bari, Italy}
\email{lucio.centrone@uniba.it}
\author[K. Pereira]{Kau\^e Pereira$^3$}
\address{3. IMECC, UNICAMP, Rua S\'ergio Buarque de Holanda 651, 13083-859 Campinas, SP, Brazil}
\email{k200608@dac.unicamp.br}
\thanks{K. Pereira was supported by FAPESP Grant 2023/01673-0, and FAPESP Grant 2025/03763-2}

\subjclass[2020]{16R10, 16R50, 16W55, 16T05}

\keywords{Regular gradings; Regular quantum commutative algebras; PI algebras}

\begin{document}
	
	\begin{abstract}
	In this paper, we establish a necessary and sufficient criterion for a finite-dimensional semisimple algebra over an algebraically closed field of characteristic $0$ to admit a regular quantum commutative decomposition. We apply this characterization to group algebras and show that a finite group algebra admits such a decomposition if and only if the underlying group is a peak group, that is, a finite group whose irreducible character degrees have a greatest element with respect to the divisibility ordering. We investigate the structure of peak groups and establish several criteria for their solvability in terms of the prime divisors of their largest irreducible character degree. In particular, we show that several important classes of groups are peak groups, while supersolvability alone does not imply the peak property. Finally, we construct a non-solvable peak group within the class of Frobenius groups.
	\end{abstract}
	\maketitle
	
	\section{Introduction}
	
	Regular gradings on associative algebras were introduced by Bahturin and
	Zaicev in \cite{BahturinZaicev} as a natural framework for describing algebras whose homogeneous
	components satisfy a commutation rule defined by a
	skew-symmetric bicharacter. Besides their intrinsic algebraic interest,
	regular gradings play an important role in the theory of polynomial
	identities, graded identities, Hopf actions, and the structure theory of
	finite-dimensional algebras. We refer the reader to
	\cite{BahturinZaicev, Kochetov.Elduque.Book} for general background.
	
	Regular gradings are a particular case of  regular quantum commutative decompositions introduced by Regev and Seeman in \cite{regevz2}. Their approach replaces  group gradings by  arbitrary vector space decompositions while
	retaining the fundamental quantum commutation relations between their
	components. Of course, every regular grading is a regular quantum commutative
	decomposition, but the latter class is considerably larger and provides
	a  framework for the study of various families of associative
	algebras.
	
	One of the main problems in this context is to determine which
	finite-dimensional algebras admit such decompositions. In
	\cite{B.L.K}, several necessary conditions have been obtained. In particular,
	let
	\[
	R=B_1\oplus\cdots\oplus B_t\oplus J(R)
	\]
	be the Wedderburn--Malcev decomposition of a regular quantum commutative finite-dimensional algebra $R$ of quantum length $m$,
	where $B_i$ are the simple components and $J(R)$ is the Jacobson radical. We have shown that the dimensions of these components depend strongly on the quantum length $m$. In the semisimple case, it was proved that,
	if the regular quantum commutative decomposition is minimal, then the maximum of the dimensions $\dim B_i$ has a nontrivial common divisor with any $\dim B_i$, $i=1,\ldots t$.
	
	In this paper, we give a complete answer to the main question in the case of semisimple algebras over  algebraically closed fields.
	
	Our main result is the following.
	
	\begin{Theorem}
		\label{main.theorem}
		Let $R=M_{n_{1}}(K)\oplus \cdots \oplus M_{n_{r}}(K)$ be a finite-dimensional semisimple algebra over an algebraically closed field $K$. Then, $R$ admits a regular quantum commutative decomposition if and only if there exists $1\leq s\leq r$ such that $n_{i}\mid n_{s}$, for all $1\leq i\leq r$. 
	\end{Theorem}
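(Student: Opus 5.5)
The plan is to prove sufficiency by an explicit construction using Pauli-type gradings, and necessity by using the bicharacter of a regular decomposition to produce a finite abelian group $\Gamma$ of order $n_s^2$ and then applying Lagrange's theorem. Throughout, a regular quantum commutative decomposition $R=\bigoplus_{\gamma}R_\gamma$ means: $ab=\beta(\gamma,\delta)ba$ for $a\in R_\gamma$, $b\in R_\delta$, and every finite product of components is nonzero for suitable homogeneous choices.

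\textbf{Sufficiency.} Assume $n_i\mid n_s$ for all $i$.
\begin{enumerate}
\item Put $G=\mathbb Z_{n_s}\times\mathbb Z_{n_s}$. Fix a primitive $n_s$-th root of unity $\omega$, and let $\beta((a,b),(c,d))=\omega^{ad-bc}$.
\item On $M_{n_s}(K)$ take the Pauli grading. The component of degree $(a,b)$ is $KX^aY^b$, where $YX=\omega XY$ and $X^{n_s}=Y^{n_s}=I$. Each homogeneous element is invertible, so any product of nonzero homogeneous elements is nonzero; this gives regularity. The commutation factor is exactly $\beta$.
\item For $M_{n_i}(K)$ with $m_i=n_s/n_i$, use the subgroup $H_i=\mathbb Z_{n_s}\times m_i\mathbb Z_{n_s}\le G$. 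On $H_i$ we have $\beta((a,m_ib),(c,m_id))=\eta_i^{ad-bc}$ with $\eta_i=\omega^{m_i}$, a primitive $n_i$-th root of unity. This form factors through the map $H_i\to\mathbb Z_{n_i}^2$, $(a,m_ib)\mapsto(a\bmod n_i,\,b\bmod n_i)$.
\item Take Pauli matrices $X_i,Y_i\in M_{n_i}(K)$ with $Y_iX_i=\eta_iX_iY_i$. For each class in $\mathbb Z_{n_i}^2$, choose one preimage in $H_i$ and place $X_i^{a}Y_i^{b}$ in the corresponding component; every other component of $G$ gets $0$ from $M_{n_i}$.
\item Define $R_g$ as the direct sum of these pieces over all $i$. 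Pieces from different blocks multiply to zero, so the quantum commutation relation with factor $\beta$ holds on all of $R_g$.
\item Regularity is inherited from the $M_{n_s}$-summand, since every $R_g$ contains the nonzero element $X^aY^b$ of $M_{n_s}$.
\end{enumerate}

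\textbf{Necessity.} Let $R=\bigoplus_{j=1}^m R_j$ be a regular quantum commutative decomposition with commutation matrix $\beta$.
\begin{enumerate}
\item Using regularity, show (as in \cite{regevz2, B.L.K}) that $\beta$ is multiplicative in each variable on products of components. Hence the columns $\beta(\cdot,j)$ generate a finite abelian group of characters, and $\beta$ induces a nondegenerate alternating bicharacter on a finite abelian group $\Gamma$.
\item Show that $|\Gamma|$ equals the rank of the matrix $(\beta(i,j))$. By the theorem identifying this rank with the PI exponent of a regular algebra, $|\Gamma|=\exp(R)=\max_i n_i^2$. Call the maximal block $M_{n_s}$, so $|\Gamma|=n_s^2$.
\item Fix a block $M_{n_i}(K)$. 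The projections of the $R_j$ onto it span it, and they quantum-commute through $\beta$.
\item Pass to a basis of $M_{n_i}(K)$ of elements that quantum-commute pairwise; the commutation factors take values in the characters coming from $\Gamma$. Because $M_{n_i}(K)$ is central simple, it becomes a twisted group algebra $K^\sigma[\Gamma_i]$. Here $\Gamma_i$ is a subquotient of $\Gamma$ on which the induced form is nondegenerate, and the center is trivial.
\item Comparing dimensions gives $|\Gamma_i|=n_i^2$. Lagrange's theorem then gives $n_i^2\mid n_s^2$, hence $n_i\mid n_s$.
\end{enumerate}

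\textbf{Main obstacle.} The hard step is the necessity argument: passing from an arbitrary vector space decomposition, which is not a grading, to a genuine group $\Gamma$ with $|\Gamma|=n_s^2$. This requires the multiplicativity of $\beta$ and the rank/exponent identity. A second delicate point is showing that the projected components of $M_{n_i}(K)$ can be simultaneously refined to quantum-commuting elements indexed by a subquotient of $\Gamma$. I would first try to handle this by decomposing each projection into joint eigenspaces of conjugation by the projected homogeneous elements.
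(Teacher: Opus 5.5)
Your plan works once Step 2 is repaired, and the route differs from the paper's in a useful way.

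\emph{Sufficiency.} The paper factors $n_s$ into primes and grades $M_{n_s}(K)$ by $C_{p_1}^2\times\cdots\times C_{p_t}^2$. It embeds each $M_{n_i}(K)$ as a graded subalgebra via $x\mapsto x\otimes 1\otimes\cdots\otimes 1$, so it gets a genuine group grading (Proposition~\ref{realization.1}). You use only $\mathbb{Z}_{n_s}^2$ and place the Pauli basis of $M_{n_i}(K)$ at chosen preimages in $H_i$. The result is in general not a grading, but Definition~\ref{dQCD} only asks for (a) and (b), and both hold. The preimage trick is genuinely needed: for $n_s=4$, $n_i=2$, no subgroup of order $4$ in $\mathbb{Z}_4^2$ carries a nondegenerate restriction of $\beta$. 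A cosmetic point: with $YX=\omega XY$ one gets $(X^aY^b)(X^cY^d)=\omega^{bc-ad}(X^cY^d)(X^aY^b)$. So the commutation function is $\beta^{-1}$, consistently in every block, which is harmless.

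\emph{Necessity.} The paper makes $\mathscr{R}$ minimal and imports two facts from \cite[Proposition 46, Theorem 52]{B.L.K}: each $M_{n_i}(K)$ is $H_i$-regularly graded by the projections $R_je_i$ with $|H_i|=n_i^2$, and $|H_1|=m$. Its own ingredient is Corollary~\ref{Bicharpres}, proved Sylow by Sylow via injectivity of bicharacter-preserving maps.

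Your Step 4 reproves the imported structure, and the joint-eigenspace idea works.
\begin{itemize}
\item A nonzero $y\in R_je_i$ quantum-commutes with the spanning set $\bigcup_l R_le_i$, so $\ker y$ is invariant and $y$ is invertible.
\item Conjugation by such elements gives a grading whose identity component is the centre $K$. Hence every component is one-dimensional, and the support $\Gamma_i$ is a group of order $n_i^2$.
\item Each $R_je_i$ lies in a single component, of degree $\lambda_i(j)$ say.
\end{itemize}
Your subquotient claim is also correct. The assignment $\chi_j\mapsto\lambda_i(j)$ extends to a well-defined surjective homomorphism $\Delta_i=\langle \chi_j : R_je_i\neq 0\rangle\to\Gamma_i$. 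Its kernel is the radical of the form on $\Delta_i$, and well-definedness uses only nondegeneracy on $\Gamma_i$.

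The same one-line argument proves Corollary~\ref{Bicharpres} for any bicharacter-preserving $f\colon H\to G$, by mapping $\langle f(H)\rangle$ onto $H$. So Lagrange replaces the Sylow analysis, and nondegeneracy on $G$ is not even needed.

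\emph{The weak step is Step 2.} You assert without argument that $|\Gamma|$ equals the rank of $(\beta(i,j))$. This is not a formal property of such matrices: for two generators of $C_2^2$, the matrix $\left(\begin{smallmatrix}1&-1\\-1&1\end{smallmatrix}\right)$ has rank $1$, but its rows generate a group of order $4$. The appeal to a ``rank $=$ exponent'' theorem is likewise an unspecified external input; the paper uses Proposition~\ref{necessary.condition}(iii) instead, which needs minimality.

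You need neither. Regularity gives $R_1R_2\cdots R_m\neq 0$, so some block $e_{i_0}$ has $R_je_{i_0}\neq 0$ for every $j$. For that block $\Delta_{i_0}=\Gamma$, and the kernel of $\Gamma\to\Gamma_{i_0}$ is trivial, because an element of $\Gamma$ pairing trivially with every $\chi_l$ is the trivial function. Hence $|\Gamma|=n_{i_0}^2$, and Step 5 gives $n_i\mid n_{i_0}$ for all $i$, with no minimality or PI theory.

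Similarly, what Step 1 actually needs is $\beta(i,j)\beta(j,i)=1$ (from $R_iR_j\neq 0$), which makes the pairing on $\Gamma$ well defined. ``Multiplicativity on products of components'' is beside the point, since such products are not components.
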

	
	The proof of Theorem 1 combines the theory of non-degenerate skew-symmetric
	bicharacters on finite abelian groups with structural properties of
	regular gradings on matrix algebras. The key step consists in showing
	that whenever the decomposition matrix associated with one regular
	grading occurs as a submatrix of another, {the corresponding grading
	groups satisfy a divisibility relation on their orders.} This immediately translates into a representation-theoretic characterization of group algebras of finite groups. Indeed, by Maschke's Theorem, a group algebra $KG$ of a finite group $G$ over an algebraically closed field $K$ whose characteristic is zero is semisimple and it is the direct sum of matrix algebras whose orders are equal to the degrees of irreducible characters of $G$. Consequently, the main result of the paper shows that the existence of a regular quantum commutative decomposition
	of $KG$ is completely determined by the divisibility relations among the
	degrees of irreducible characters  of $G$.
	
	This motivates the introduction of the class of \emph{peak groups}, namely finite groups whose irreducible character degrees possess the greatest element with respect to the divisibility order. We prove that a finite group algebra admits a regular quantum commutative decomposition if and only if the underlying group is a peak group. We further investigate this class by showing, for instance, that all finite nilpotent groups and all metacyclic groups are peak groups and therefore give rise to regular quantum commutative decompositions of their group algebras. These two families belong to the class of supersolvable groups. Nevertheless, we show that supersolvability alone does not imply the peak property.  We also establish several solvability criteria for peak groups whose maximal irreducible character degree is divisible by at most three distinct prime numbers. Finally, by working within the class of Frobenius groups, we construct a  non-solvable peak group. 
	
	\section{Background}

	Throughout this paper, $K$ will denote an algebraically closed field of characteristic $0$. By an algebra, we mean an associative unital $K$-algebra.

    For any $n>1$ we will write $\textbf{U}_n$ to denote the subgroup of $n$-th roots of unity in the multiplicative group $K^\ast$.

	\begin{Definition}\cite[Definition 2.3]{regevz2}\label{dQCD} Let $R$ be an associative algebra over a field $K$. Consider a vector space decomposition of $R$ into nonzero subspaces 
		\begin{equation}\label{eQCD}
			\mathscr{R}\colon\quad R=\bigoplus_{s\in S} R_s.
		\end{equation}
		We call $\mathscr{R}$ a \textit{regular quantum commutative decomposition} of $R$ with support $S$  if the following hold:
		\begin{enumerate}
			\item[(a)] for any natural $n$ and any, possibly equal, $s_{1},\ldots, s_{n}\in S$, \[R_{s_1}\cdots R_{s_n}\ne\{ 0\}.\]
			\item[(b)] there is a function $\beta:S\times S\to K^\ast$ such that for each $s,t\in S$, $a\in R_s$ and $b\in R_t$,  one has 
			\[
			ab=\beta(s,t)ba.
			\]  
		\end{enumerate}
		The function $\beta$ is called the \textit{regular quantum commutative function of $R$ with respect to the decomposition $\mathscr{R}$}. If $S$ is finite, then $m=|S|$ is called   the \textit{quantum length} of $\mathscr{R}$. An algebra $R$ admitting $\rR$ and satisfying (b) only is called \emph{$(\rR,\beta)$-commutative}. If the \emph{datum} $(\rR,\beta)$ is fixed, we simply say that $R$ is a \emph{regular quantum commutative algebra} and we write $R=(\rR,\beta)$.
	\end{Definition}
	The matrix $\cM^{R}=(\beta(i,j))$ is called the \textit{quantum decomposition matrix} (or just the \textit{decomposition matrix}) of a regular quantum commutative algebra $R=(\mathscr{R},\beta)$. 
	
	\begin{Remark}\label{graded} Let $S$ be an abelian group $G$  and  let $\rR$ be a grading by $G$. Then $\beta$ is a skew-symmetric bicharacter of $G$, and $R$ is a $G$-graded  $\beta$-commutative algebra in the usual sense. 
		In this case, we say that $\rR$ is a \textit{$G$-regular grading} with a skew-symmetric bicharacter $\beta$. We also say that $R$ is a $\beta$-commutative $G$-regular graded algebra and we write $\cM_{\beta}:=\cM^{R}$.
	\end{Remark}
	\begin{Definition} Let $R=(\rR,\theta)$ be a regular quantum commutative algebra. We say that the regular quantum commutative decomposition of $R$ is \textit{minimal} if the columns (equivalently, rows) of the quantum decomposition matrix $\cM^{R}$ are pairwise different.
	\end{Definition}

    \begin{Example}
        In the case of Remark \ref{graded}, $\rR$ is minimal if and only if $\beta$ is non-degenerate in the usual sense, i.e., if $\beta(g,x)=1$ for all $x\in G$, then $g=e$, or equivalently, $\det \cM_\beta\neq 0$ (see \cite{Eli1} and \cite{B.L.K} ).
    \end{Example}
	
	\begin{Definition}\label{submatrix} Let $G$ and $H$ be finite abelian groups, and let $\beta$ and $\gamma$ be skew-symmetric bicharacters of $G$ and $H$, respectively. We say that $\cM_\beta$ is a \emph{submatrix} of $\cM_\gamma$ if there exists a function $f\colon H\rightarrow G$ such that $\gamma$ is a restriction of $\beta$ to $f(H)$, in other words,
		\[
		\gamma(h_{1},h_{2})=\beta(f(h_{1}),f(h_{2})),\quad \text{for all}\quad h_{1},h_{2}\in H.
		\]
	\end{Definition}
	
	\begin{Remark}\label{Remark1} Given a finite abelian group $G$ and $\beta\colon G\times G\rightarrow K^{\ast}$ a skew-symmetric bicharacter of $G$, it is well known that $\beta(g,h)\in \textbf{U}_n$, $n=|G|$, for all $g$, $h\in G$. It follows immediately that any two subgroups $H$ and $K$ of $G$ with coprime orders are \textit{orthogonal} with respect to $\beta$: if $x\in H$ and $y\in K$, then $\beta(x,y)=1$. 
	\end{Remark}

	\begin{Theorem}[Theorem 27 and Theorem 29 of \cite{B.L.K}]
		Let $R=(\rR,\beta)$ be a regular quantum commutative algebra with support $S$.
		\begin{enumerate}
			\item[(a)] If $K$ is an arbitrary field and $\dim R<\infty$, then $\beta(s,t)\in \textbf{U}_n$, for all $s,t\in S$, where $n=\dim R$.
			\item[(b)] If $\operatorname{char}(K)=0$ and $K$ is an algebraically closed field, then all $\beta(s,t)\in \textbf{U}_k$ for some $k\in \mathbb{N}$.
		\end{enumerate}
	\end{Theorem}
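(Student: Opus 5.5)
The plan is to encode the relation $ab=\beta(s,t)ba$ as a $q$-commutation between left-multiplication operators on $R$, and to read off $\beta(s,t)$ from their spectra. Part~(a) is done first, and part~(b) is then reduced to it.

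\textbf{First reductions.} Fix $s,t\in S$ and nonzero $a\in R_s$, $b\in R_t$. From $ab=\beta(s,t)ba$ and $ba=\beta(t,s)ab$ we get $(1-\beta(s,t)\beta(t,s))ab=0$. By condition~(a) of Definition~\ref{dQCD} we may choose $a,b$ with $ab\neq0$. Hence $\beta(t,s)=\beta(s,t)^{-1}$, and similarly $\beta(s,s)=1$.

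\textbf{Step 1: each $R_t$ contains a non-nilpotent element.} Suppose instead that every element of $R_t$ is nilpotent. Take a basis $y_1,\dots,y_d$ of $R_t$ and a common nilpotency index $N$. Any two basis elements $\beta(t,t)$-commute, so a monomial in the $y_i$ of length greater than $d(N-1)$ contains some $y_i$ at least $N$ times. Reordering it, at the cost of a nonzero scalar, produces a factor $y_i^N=0$. Thus $R_t^{\,d(N-1)+1}=0$, contradicting condition~(a). So there are non-nilpotent $a\in R_s$ and $y\in R_t$.

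\textbf{Step 2: pass to a piece where both operators are invertible.} Put $A=L_a$ and $B=L_y$. These satisfy
\[
AB=\beta BA,\qquad \beta=\beta(s,t).
\]
The relation implies $A^NB=\beta^NBA^N$, so $B$ preserves both Fitting components of $A$, and symmetrically. Let $V$ be the joint Fitting component on which both $A$ and $B$ are invertible. Then $V\supseteq a^Ny^MR$ for large $M,N$. Since $R$ is unital and $a^Ny^M\neq0$ by condition~(a), we have $V\neq0$.

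On $V$ we have $A^{-1}BA=\beta^{-1}B$. Hence the spectrum of $B|_V$ is a nonempty finite set of nonzero scalars stable under multiplication by $\beta$. It follows that $\beta$ is a root of unity, say of order $d$. Moreover, conjugation by $A$ permutes the generalized eigenspaces of $B|_V$ in $d$-cycles of equal dimensions, so $d\mid\dim V$.

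\textbf{Step 3 (main obstacle): upgrade $d\mid\dim V$ to $d\mid n$.} This is the step I expect to be hardest. The remaining Fitting pieces of $R$ must also be shown to have dimension divisible by $d$. On the piece where $A$ is invertible and $B$ is nilpotent, $A$ permutes the subspaces $\ker B^k$. I would first try a dimension count on the associated graded space, using the filtration by these kernels and the same $d$-cycle argument for eigenspaces of $A$. Failing that, I would pass to the finite-dimensional semisimple quotient. There I would use that $R_s$ acts by $q$-commuting invertible elements on each simple block, where the determinant argument $\det(A)\det(B)=\beta^{m}\det(B)\det(A)$ gives $\beta^{m}=1$ block by block.

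\textbf{Part (b).} The plan is to apply the argument of Step~2 inside a suitable finite-dimensional setting. I would take a finite-dimensional quotient or an invariant finite-dimensional subspace on which $L_a$ and $L_y$ are both invertible, and conclude as in Step~2 that each $\beta(s,t)$ is a root of unity. Provided the support $S$ is finite, one can then take $k$ to be the least common multiple of the orders of these roots of unity.

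I must flag a caveat here. For infinite support and infinite dimension, the quantum torus $K\langle x^{\pm1},y^{\pm1}\rangle$ with $xy=qyx$, graded by monomials, appears to satisfy conditions~(a) and~(b) with $q$ not a root of unity. So a hypothesis of this kind, finite support or finite dimension, must be implicit in the cited result, and I would check the original statement for it.
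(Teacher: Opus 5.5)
\textbf{Part (a).} The paper only quotes this result from \cite{B.L.K}, so there is no in-paper proof to compare with. Judged on its own, your argument breaks exactly at Step 3, and that step cannot be completed. With $\mathbf{U}_n$ the group of $n$-th roots of unity, as defined in the paper, part (a) is false as stated.

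Take $K=\mathbb{C}$ and $R=M_2(K)\oplus K$ with the $C_2\times C_2$-regular grading of Proposition~\ref{realization.1}: $R_e=\operatorname{span}\{(I,0),(0,1)\}$, $R_a=K(P,0)$, $R_b=K(Q,0)$, $R_{ab}=K(PQ,0)$, where $P=\operatorname{diag}(1,-1)$ and $Q=\begin{pmatrix}0&1\\1&0\end{pmatrix}$. Both conditions of Definition~\ref{dQCD} hold and $\beta(a,b)=-1$. Yet $\dim R=5$ and $(-1)^5\neq 1$.

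In your notation, with $a=(P,0)$ and $y=(Q,0)$, you get $V=M_2(K)\oplus 0$. The complementary piece $0\oplus K$, on which $L_a$ and $L_y$ both vanish, has dimension $1$. No filtration or semisimple-quotient argument can make its dimension divisible by $d=2$.

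What Steps 1--2 do prove is the corrected statement: $\beta(s,t)^{\dim V}=1$ with $1\le\dim V\le n$. The determinant identity for $A|_V$ and $B|_V$ gives this directly, without the eigenvalue-cycle argument. So $\beta(s,t)$ is a root of unity of order at most $n$, i.e.\ $\beta(s,t)\in\mathbf{U}_{n!}$.

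Step 2 also needs a repair. Condition (a) of Definition~\ref{dQCD} concerns the spans $R_s^NR_t^M$, not the particular non-nilpotent elements you picked in Step 1, and those can satisfy $ay=0$. For example, take $R=K\times K$ as a single component with $a=e_1$ and $y=e_2$. Choose $a,y$ jointly instead: run the reordering argument of Step 1 on mixed monomials $x_{i_1}\cdots x_{i_N}y_{j_1}\cdots y_{j_M}$ in bases of $R_s$ and $R_t$. If all $x_i^ny_j^n=0$, such monomials vanish for large $N,M$, contradicting condition (a). This gives basis vectors $a\in R_s$, $y\in R_t$ with $a^ny^n\neq 0$, and then $V=a^ny^nR\neq 0$.

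\textbf{Part (b).} Your caveat is correct. The $\mathbb{Z}^2$-graded quantum torus with $q$ not a root of unity satisfies Definition~\ref{dQCD}, so a finiteness hypothesis such as $|S|<\infty$ is needed and is missing from the formulation here. However, your proposed reduction to a finite-dimensional invariant subspace or quotient on which $L_a,L_y$ are invertible is unsupported. Step 1 also does not apply to infinite-dimensional components.

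Finiteness of $S$ can be used directly, over any field, as follows.
\begin{enumerate}
\item Let $\theta$ act on each $R_r$ as the scalar $\beta(t,r)$. Then $vx=\theta(x)v$ for all $v\in R_t$ and $x\in R$.
\item For $w=u_1\cdots u_k$ with $u_i\in R_s$, and $v_1,\dots,v_L\in R_t$, this gives $v_1\cdots v_jw=\theta^j(w)v_1\cdots v_j=\mu^jwv_1\cdots v_j$, where $\mu=\beta(t,s)^k$.
\item Hence $(\theta^j(w)-\mu^jw)v_1\cdots v_L=0$ for $0\le j\le L$.
\item Let $\Lambda=\{\beta(t,r):r\in S\}$, and suppose $\mu\notin\Lambda$ and $L\ge|\Lambda|$. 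Combine these relations with the coefficients of $p(x)=\prod_{\lambda\in\Lambda}(x-\lambda)$ and use $p(\theta)=0$. This yields $p(\mu)\,wv_1\cdots v_L=0$, so $R_s^kR_t^L=0$, contradicting condition (a).
\item Therefore $\beta(t,s)^k\in\Lambda$ for every $k\ge 1$, so $\beta(t,s)$ has order at most $|S|$.
\end{enumerate}
Consequently all $\beta(s,t)$ lie in $\mathbf{U}_{|S|!}$. Since $|S|\le\dim R$, this also recovers the corrected form of (a).
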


	In \cite{B.L.K}, we can find the following result, with a slightly different notation.
	\begin{Proposition}\cite[Proposition 46]{B.L.K}\label{prop from 6}
		\label{necessary.condition}
		Let $R=(\rR,\theta)$ be a finite-dimensional regular quantum commutative algebra. Consider its Wedderburn--Malcev decomposition, $R = B +J(R)$, where $B = B_{1} \oplus \cdots \oplus B_{k}$ is the direct sum (as algebras) of simple subalgebras of $R$. If $m$ is the quantum length of $\mathscr{R}$, and $d_{i}:=\dim B_{i}$, $1\leq i\leq k $, then
		\begin{enumerate}
			\item[(i)] For all $1\le i\le k$, $d_i\leq m$.
    \item[(ii)] If $d_\ell$ is the greatest among all $d_i$, then $\gcd(d_{i},d_{\ell})>1$, $1\leq i\leq k $. 
			\item[(iii)] If $\mathscr{R}$ is minimal and $R$ semisimple, say $R=B_{1} \oplus \cdots \oplus B_{k}$, then \[\max\{\dim B_1,\ldots,\dim B_k\}=m.\]
		\end{enumerate}
	\end{Proposition}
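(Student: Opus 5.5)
The plan is to reduce everything to the simple components by projecting, and to show that each simple component inherits a regular grading by a finite abelian group with a non-degenerate bicharacter. First I pass from $R$ to $\bar R=R/J(R)\cong B_1\oplus\cdots\oplus B_k$. The images $\bar R_s$ of the components still span $\bar R$ and still satisfy $\bar a\bar b=\theta(s,t)\bar b\bar a$. Moreover $\bar R_s\neq 0$ for every $s$: by condition (a) of Definition~\ref{dQCD} the powers $R_s^n$ never vanish, whereas $J(R)$ is nilpotent. Then I fix $i$ and let $\pi_i\colon R\to B_i\cong M_{n_i}(K)$ be the projection, and set $S_i=\{s\in S:\pi_i(R_s)\neq 0\}$.

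\textbf{Key local step.} I would show that for $s\in S_i$ the space $\pi_i(R_s)$ is one-dimensional and spanned by an invertible matrix. Take $0\neq a\in\pi_i(R_s)$. Since the $\pi_i(R_t)$ span $B_i$ and $ab=\theta(s,t)ba$ for $b\in\pi_i(R_t)$, we get $aB_i=B_ia$. Hence $aB_i$ is a nonzero two-sided ideal of the simple algebra $B_i$, so $aB_i=B_i$ and $a$ is invertible. If $a'\in\pi_i(R_s)$ is another element, then $a^{-1}a'$ commutes with every $\pi_i(R_t)$, because the scalars $\theta(s,t)$ cancel. So $a^{-1}a'$ is central, hence a scalar. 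This gives $\dim B_i\le |S_i|\le m$, which is (i).

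Next, the invertible elements $a_s$ ($s\in S_i$) generate modulo $K^\ast$ a finite abelian group $G_i$. It is abelian because the $a_s$ commute up to scalars, and finite because the $\theta$-values are roots of unity (Theorem 27/29 of \cite{B.L.K}). Then $B_i=\bigoplus_{g\in G_i}Ka_g$ is a $G_i$-regular grading with bicharacter $\beta_i$. It is non-degenerate: if $g$ were in the radical of $\beta_i$, then $a_g$ would be central, so $g=e$. Therefore $|G_i|=\dim B_i=d_i$. The map $f_i\colon S_i\to G_i$, $s\mapsto [a_s]$, satisfies $\theta(s,t)=\beta_i(f_i(s),f_i(t))$, so $\theta$ restricted to $S_i$ takes values in $\textbf{U}_{d_i}$, by Remark~\ref{Remark1}. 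For (iii): if $R$ is semisimple and $\rR$ is minimal, I take $\ell$ with $d_\ell$ maximal and aim to show $S_\ell=S$ and that $f_\ell$ is injective. Injectivity holds because distinct rows of $\cM^R$ cannot come from the same element of $G_\ell$ once every $s$ is supported on $B_\ell$; surjectivity holds because the $a_s$ span $B_\ell$. Together these give $m=|G_\ell|=d_\ell$.

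\textbf{Main obstacle: (ii).} Suppose $\gcd(d_i,d_\ell)=1$. For $s,t\in S_i\cap S_\ell$, $\theta(s,t)$ lies in $\textbf{U}_{d_i}\cap\textbf{U}_{d_\ell}=\{1\}$. My first attempt is as follows. Using non-degeneracy of $\beta_\ell$, the elements $f_\ell(s)$ for $s\in S_i\cap S_\ell$ lie in the radical of $\beta_\ell$ restricted to the subgroup they generate. I would push this, with the analogous statement in $B_i$, to show that such $R_s$ project to scalars in both components. Then I would use condition (a) on a suitable product $R_{s_1}\cdots R_{s_n}$, with $s_1\in S_i\setminus S_\ell$ and $s_2\in S_\ell\setminus S_i$, interleaved with the unit-like components, to force a zero product and reach a contradiction. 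Controlling the components that meet several blocks simultaneously, and the radical $J(R)$ in the non-semisimple case, is the step I expect to require the most care.
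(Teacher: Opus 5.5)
Your reduction to $R/J(R)$, the local step ($\pi_i(R_s)$ is either $0$ or $Ka_s$ with $a_s$ invertible), the proof of (i), and the non-degenerate $G_i$-gradings with $|G_i|=d_i$ and $\theta(s,t)=\beta_i(f_i(s),f_i(t))$ on $S_i$ are all correct. What is missing is the one global consequence of condition (a) on which both (ii) and (iii) rest.

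In (iii) you only announce that you will show $S_\ell=S$, and maximality of $d_\ell$ gives no handle on this. The missing observation is the following. Write $\pi_j(R_s)=Ka^{(j)}_s$ when $s\in S_j$. Then $\pi_j(R_{s_1}\cdots R_{s_n})=K\,a^{(j)}_{s_1}\cdots a^{(j)}_{s_n}\neq 0$ if every $s_r\in S_j$, and it is $0$ otherwise. So (a), applied to a word listing all of $S$, produces a block $j_0$ with $S_{j_0}=S$. Condition (a) also descends to $\bar R$: if $\bar R_{s_1}\cdots\bar R_{s_n}=0$, then the word repeated $N$ times lands in $J(R)^N=0$. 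Now $f_{j_0}$ is defined on all of $S$, and minimality makes it injective. Hence $m\le d_{j_0}\le\max_j d_j\le m$ by (i). Only afterwards does $S_\ell=S$ follow for a maximal $\ell$, from $|S_\ell|\ge d_\ell=m$. You must locate $j_0$ first rather than fix $\ell$.

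For (ii), your sketch is not a proof, and its intermediate steps fail. Take $R=M_2(K)\oplus K$ and $S=\{e,a,b,ab\}\cong C_2^2$. Let $P=\mathrm{diag}(1,-1)$ and $Q=\left(\begin{smallmatrix}0&1\\1&0\end{smallmatrix}\right)$, and set $R_e=K(I,0)$, $R_a=K(P,0)\oplus K(0,1)$, $R_b=K(Q,0)$, $R_{ab}=K(PQ,0)$. This is a minimal regular quantum commutative decomposition: $\theta$ is the non-degenerate Pauli bicharacter, and each $R_s$ contains an element with invertible $M_2$-component, so (a) holds.

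In this example $S_2=\{a\}\subseteq S_1=S$. So there is no $s_1\in S_2\setminus S_1$, and $\pi_1(R_a)=KP$ is not scalar, contrary to your intended step. Moreover $\gcd(d_2,d_1)=\gcd(1,4)=1$. Thus (ii) as written is false whenever some $d_i=1$ (already for $R=K$), so no contradiction can be extracted from $\gcd(d_i,d_\ell)=1$ alone, and the statement must be read for $d_i>1$.

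With $j_0$ in hand, the correct argument is short. The map $f_i\colon S_i\to G_i$ is onto, because the $a_s$ span $B_i$. So for any section $\sigma$ of $f_i$, the map $f_{j_0}\circ\sigma\colon G_i\to G_{j_0}$ preserves bicharacters, and Corollary~\ref{Bicharpres} gives $d_i\mid d_{j_0}$. Hence $d_{j_0}$ is the maximum, $d_\ell=d_{j_0}$, and $\gcd(d_i,d_\ell)=d_i>1$ whenever $d_i>1$. The paper only quotes this proposition without proof, but this divisibility is exactly the submatrix step in its proof of Theorem~\ref{main.theorem}.
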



	\section{Criterion of quantum regularity}

	Throughout this section we will write $C_n$ to denote the multiplicative cyclic group of order $n$. 
	
	\subsection{Sub-decomposition matrices}
A complete classification of skew-symmetric bicharacters on finite abelian groups was given in \cite{zolotykh1997commutation}. We briefly list here some facts which we will use in what follows. 

    Given skew-symmetric bicharacters $\beta_1,\ldots,\beta_k$ on the abelian  groups $G_1,\ldots, G_k$, one can define a skew-symmetric character $\beta$ on their direct product $G=G_1\times\cdots\times G_k$ by setting 
     \begin{equation}\label{productofbicharacters}
    \beta(a_1\cdots a_k,b_1\cdots b_k)=\beta_1(a_1,b_1)\cdots\beta_k(a_k,b_k)\mbox { where } a_i,b_i\in G_{i}\mbox{ for }1\le i\le k.    \end{equation}
   We denote such \textit{external} product by $\beta=\beta_1\otimes\cdots\otimes \beta_k$. It is easy to see that the matrix $\cM_\beta$ of $\beta$ is the Kronecker product  of the matrices $\cM_{\beta_i}$ for $\beta_i$, $i=1,\ldots,k$:
   \[
   \cM_\beta=\cM_{\beta_1}\otimes \cdots\otimes \cM_{\beta_k}.
   \]
   Note that $\beta$ is non-degenerate if and only if all $\beta_1,\ldots,\beta_k$ are non-degenerate.

    As a partial converse, let $G$ be a finite abelian group whose order is the product of nonzero powers of pairwise different numbers $p_1,\ldots,p_k$. Then  $G=G_1\times\cdots\times G_k$, where each $G_i$ is a Sylow $p_i$-subgroup of $G$. Given a skew-symmetric bicharacter $\beta$ on $G$, we denote its restrictions $\beta_i$ to $G_{i}$ by  $\beta_i$. Then the value of $\beta$ can be computed using (\ref{productofbicharacters}). In particular, the decomposition $G=G_1\times\cdots\times G_k$ is  \textit{orthogonal}, in the sense that $\beta(G_i, G_j)=1$, for all $1\le i\ne j\le k$. With some abuse of notation, we write $\beta=\beta_1\otimes\cdots\otimes \beta_k$ also in the case of \textit{internal} direct products of groups.

    Now suppose we have two finite abelian groups, each endowed with a non-degenerate skew-symmetric bicharacter, $\beta$ for $G$, and $\gamma$ for $H$. As in Definition \ref{submatrix}, we consider a bicharacter preserving map $f:H\to G$,  that is, for all $x,y\in H$ we have $\beta(f(x),f(y))=\gamma(x,y)$. As noted, for each prime divisor $p$ of both orders of $G$ and $H$, the restriction $\beta_p$ (resp., $\gamma_p$) of $\beta$ (resp., $\gamma$) to a Sylow $p$-subgroup $G_p$ of $G$ (resp., $H_p$ of $H$) is non-degenerate. Now if  $\pi:G\to G_p$ is a natural projection of $G$ onto $G_p$ then one can consider the map $f_p:H_p\to G_p$ which is the restriction of $\pi\circ f:H\to G_p$ to $H_p$.
	
	\begin{Lemma} 
		\label{Lemma.1}
		For each prime $p$, which is the divisor of both $G$ and $H$, the map $f_p: H_p\to G_p$ is an injective bicharacter preserving map.
	\end{Lemma}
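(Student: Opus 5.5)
The plan is to separate each value of $\beta$ into its primary parts and then use the fact that a $p$-power root of unity cannot pick up any nontrivial $q$-part for $q\neq p$. Injectivity will then follow from non-degeneracy of $\gamma_p$. Nowhere will we need $f$ (or $f_p$) to be a homomorphism: the argument only compares values of the bicharacters.

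\emph{Bicharacter preservation.} Let $x,y\in H_p$. Write $f(x)=\prod_q f(x)_q$ and $f(y)=\prod_q f(y)_q$ with $f(x)_q, f(y)_q\in G_q$, so that $f(x)_p=\pi(f(x))=f_p(x)$. By the orthogonal decomposition $\beta=\bigotimes_q\beta_q$ and formula (\ref{productofbicharacters}),
\[
\gamma(x,y)=\beta(f(x),f(y))=\prod_q \beta_q\bigl(f(x)_q,f(y)_q\bigr).
\]
By Remark \ref{Remark1} applied to $G_q$, each factor $\beta_q(\cdot,\cdot)$ is a $q$-power root of unity, and $\gamma(x,y)$ is a $p$-power root of unity since $x,y\in H_p$. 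The group of roots of unity of order dividing $|G|\cdot|H|$ is a finite abelian (cyclic) group, so each of its elements decomposes uniquely into primary components. Comparing $p$-parts and $q$-parts for $q\ne p$ forces $\beta_q(f(x)_q,f(y)_q)=1$ for every $q\neq p$. Hence
\[
\gamma(x,y)=\beta_p\bigl(f_p(x),f_p(y)\bigr)=\beta\bigl(f_p(x),f_p(y)\bigr).
\]

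\emph{Injectivity.} Suppose $x,y\in H_p$ satisfy $f_p(x)=f_p(y)$. By the previous step, for every $z\in H_p$ we get $\gamma(x,z)=\beta(f_p(x),f_p(z))=\gamma(y,z)$. Bilinearity then gives $\gamma(xy^{-1},z)=1$ for all $z\in H_p$. Moreover $xy^{-1}\in H_p$ is orthogonal to every other Sylow subgroup $H_q$ by Remark \ref{Remark1}. Therefore $\gamma(xy^{-1},z)=1$ for all $z\in H$, and non-degeneracy of $\gamma$ yields $xy^{-1}=e$, that is, $x=y$.

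The only delicate point is the first step: one must justify that the cross factors $\beta_q$ with $q\ne p$ vanish even though $f$ is merely a function. The primary-decomposition argument in the group of roots of unity handles this cleanly. The rest is routine.
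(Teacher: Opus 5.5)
Your proof is correct and follows essentially the same route as the paper. The paper writes $f(x)=f_p(x)u$, $f(y)=f_p(y)v$ with $u,v$ in the product $G_p'$ of the other Sylow subgroups, and kills the single cross term $\beta(u,v)$ because it is a root of unity of order prime to $p$; you do the same comparison prime by prime. For injectivity, the paper first notes that any bicharacter-preserving map out of a group with a non-degenerate bicharacter is injective, and applies this to $f_p$ using the non-degeneracy of $\gamma_p$, which you instead verify inline via orthogonality of the Sylow subgroups of $H$.
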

	\begin{proof} First we note that any bicharacter preserving map $f:H\to G$ of $H$ with non-degenerate bicharacter $\gamma$ into $G$ with non-degenerate bicharacter $\beta$ is injective. Indeed, if $x_1,x_2\in H$ are such that $f(x_1)=f(x_2)$, then, for any $x\in H$, we should have
		\[
		\gamma(x,x_1)=\beta(f(x),f(x_1))=\beta(f(x),f(x_2))=\gamma(x,x_2).
		\]
		 Since $x \in H$ is arbitrary and  $\gamma$ is non-degenerate, we get $x_1 = x_2$. Thus, $f$ is injective. 

         To prove that $f_p$ is bicharacter preserving, we take $x,y\in H_p$. We need to show that $\beta_p(f_p(x), f_p(y))=\gamma_p(x,y)$. Now $f(x)=f_p(x)u$ and $f(y)=f_p(y)v$, where $u,v\in G_p'$. By definitions and since $\beta(G_p,G_p')=1$, we have
         \begin{align}
        \gamma_p(x,y)&=\gamma(x,y)=\beta(f(x),f(y))=\beta(f_p(x)u,f_p(y)v)\label{betagamma}\\&=\beta(f_p(x),f_p(y))\beta(u,v)=\beta_p(f_p(x),f_p(y))\beta(u,v). \nonumber  
         \end{align}

      As noted earlier (\ref{Remark1}), if $|G_p|=p^k$ and $|H_p|=p^r$, then $\gamma_p(x,y)\in \textbf{U}_{p^r}$, $\beta_p(f_p(x),f_p(y))\in\textbf{U}_{p^k}$ and $\beta(u,v)\in \textbf{U}_{|G_p'|}$. Since $|G_p'|$ is coprime to $p$, we must have $\beta(u,v)=1$. In this case, (\ref{betagamma}) implies
      \[
\gamma_p(x,y)=\beta_p(f_p(x),f_p(y)).
      \]
      So that $f_p$ is bicharacter preserving. By the first part of the proof, $f_p$ is injective, and so the proof is complete.
        \end{proof}
    \begin{Remark} Let $G$ and $H$ be finite abelian groups, each endowed with a non-degenerate skew-symmetric bicharacter, denoted by $\beta$ on $G$ and by $\gamma$ on $H$. Suppose that $f\colon H\rightarrow G$ is a bicharacter-preserving map. Then, the proof of Lemma~\ref{Lemma.1} shows that if $p\mid |H|$, then $p\mid |G|$. In other words, the set of prime divisors of $|H|$ is contained in the set of prime divisors of $|G|$.
    \end{Remark}

      \begin{Corollary}\label{Bicharpres}
      Let $G$ and $H$ be finite abelian groups endowed by respective non-degenerate bicharacters $\beta$ and $\gamma$. If $f:H\to G$ is a bicharacter preserving map, then $|H|$ divides $|G|$.    
      \end{Corollary}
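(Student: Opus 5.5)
The plan is to reduce to Sylow subgroups and use Lemma~\ref{Lemma.1}. Since $|H|=\prod_p |H_p|$ and $|G|=\prod_p |G_p|$, with the products taken over the primes dividing the respective orders, it suffices to show two things. First, every prime $p$ dividing $|H|$ also divides $|G|$. Second, for each such $p$ we have $|H_p|\le |G_p|$. The second gives $|H_p|\mid |G_p|$, because both are powers of the same prime $p$, and multiplying over primes then yields $|H|\mid |G|$.

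For the first point I would use the Remark following Lemma~\ref{Lemma.1}, re-derived directly. Take $p\mid |H|$. As observed before Lemma~\ref{Lemma.1}, the restriction $\gamma_p$ of $\gamma$ to $H_p$ is non-degenerate, and $H_p\neq\{e\}$, so there are $x,y\in H_p$ with $\gamma(x,y)\neq 1$. This value lies in $\textbf{U}_{p^r}$ with $|H_p|=p^r$, so it is a nontrivial $p$-power root of unity. Now suppose $p\nmid |G|$. Then $\gamma(x,y)=\beta(f(x),f(y))\in \textbf{U}_{|G|}$ by Remark~\ref{Remark1}. The only root of unity of order dividing both $p^r$ and $|G|$ is $1$, which is a contradiction. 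Hence $p\mid |G|$.

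For the second point, let $p$ divide both orders. Lemma~\ref{Lemma.1} says $f_p\colon H_p\to G_p$ is injective, so $|H_p|\le |G_p|$. Since both are powers of $p$, we get $|H_p|\mid |G_p|$.

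Combining the two points, $|H|=\prod_{p\mid |H|}|H_p|$ divides $\prod_{p\mid |H|}|G_p|$, and this in turn divides $|G|$. I expect no real obstacle, since everything is supplied by Lemma~\ref{Lemma.1}. The only point needing care is the non-degeneracy of $\gamma_p$ on $H_p$. It follows from the orthogonal decomposition $\gamma=\gamma_{p_1}\otimes\cdots\otimes\gamma_{p_k}$: non-degeneracy of $\gamma$ forces each factor to be non-degenerate, as noted in the text.
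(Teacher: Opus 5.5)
Your proposal is correct and follows essentially the same route as the paper, which applies Lemma~\ref{Lemma.1} to get $|H_p|\mid |G_p|$ for each prime and then multiplies over the Sylow subgroups. You also spell out why every prime divisor of $|H|$ divides $|G|$; the paper's two-line proof leaves this to the Remark following Lemma~\ref{Lemma.1}.
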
  
	
	\begin{proof} By Lemma \ref{Lemma.1}, the order of each Sylow $p$-subgroup of $H$ divides the order of the Sylow $p$-subgroup of $G$. Since the order of a finite group is the product of the orders of its Sylow subgroups, our claim follows. 
	\end{proof}

	\begin{Remark} Let $G$ and $H$ be finite abelian groups endowed with  non-degenerate skew-symmetric bicharacters $\beta$ and $\gamma$, respectively. Assume that $\gcd(|G|,|H|)=1$. If $f:H\to G$ is a bicharacter preserving map then  for any $x,y\in H$, we have
    \[
    \gamma(x,y)=\beta(f(x),f(y)).
    \]	
	Now the left-hand side of this equation is in $\textbf{U}_{|H|}$ whereas the right-hand side in 	$\textbf{U}_{|G|}$. Thus, $\gamma$ is not non-degenerate, unless $H=\{ e\}$.
	\end{Remark}
	\subsection{Realizing regular gradings}
	
	The following result is essentially a summary of some key points in \cite{Kochetov.Elduque.Book}.
	\begin{Lemma}
		\label{realization}
		Let $n\in \mathbb{N}$ and $\ell_{1}$,\dots, $\ell_{r}\in \mathbb{N}$ be such that $\ell_{1}\cdots \ell_{r}=n$. Given the finite abelian group $G:=C_{\ell_{1}}^{2}\times \cdots \times C_{\ell_{r}}^{2}$, we have the following:
		\begin{enumerate}
			\item[(a)] There exists a non-degenerate skew-symmetric bicharacter $\beta_{G}\colon G\times G\rightarrow K^{\ast}$ of $G$.
			\item[(b)] The matrix algebra $M_{n}(K)$ admits a structure of $G$-regular grading with skew-symmetric bicharacter $\beta_G$.
			\item[(c)] Given $1\leq s\leq r$, $k_{1},\ldots, k_{s}\in \{\ell_{1},\ldots, \ell_{r}\}$, and $H:=C_{k_{1}}^{2}\times \cdots \times C_{k_{s}}^{2}$, there exists a skew-symmetric bicharacter $\beta_{H}$ of $H$ such that $\beta_{H}=\beta_{G}\mid_{H}$, and if $m:=k_{1}\cdots k_{s}$, then $M_{m}(K)$ admits a structure of $H$-regular grading with skew-symmetric bicharacter $\beta_{H}$.
		\end{enumerate}
	\end{Lemma}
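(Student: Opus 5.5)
The plan is to build everything from the cyclic case and then take external products, as in \eqref{productofbicharacters}.

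\emph{Cyclic building block.} For each $\ell\in\mathbb{N}$, fix a primitive $\ell$-th root of unity $\varepsilon_\ell\in K$; it exists because $K$ is algebraically closed of characteristic $0$. On $C_\ell^2=\langle a\rangle\times\langle b\rangle$ define
\[
\beta_\ell(a^ib^j,a^kb^l)=\varepsilon_\ell^{\,il-jk}.
\]
This is clearly a bicharacter, and it is skew-symmetric since $\beta_\ell(x,y)\beta_\ell(y,x)=1$. It is non-degenerate: if $\beta_\ell(a^ib^j,a)=\varepsilon_\ell^{-j}=1$ and $\beta_\ell(a^ib^j,b)=\varepsilon_\ell^{i}=1$, then $i\equiv j\equiv 0 \pmod \ell$.

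On $M_\ell(K)$ take the clock and shift matrices $X=\operatorname{diag}(1,\varepsilon_\ell,\ldots,\varepsilon_\ell^{\ell-1})$ and $Y$, the cyclic permutation matrix. They satisfy $X^\ell=Y^\ell=I$ and $XY=\varepsilon_\ell YX$ (after choosing the sign convention to match $\beta_\ell$). The $\ell^2$ matrices $X^iY^j$ are linearly independent; this is standard, and also follows from the non-degeneracy of the character they define. So $M_\ell(K)=\bigoplus K X^iY^j$ is a $C_\ell^2$-grading ($a^ib^j\mapsto X^iY^j$) with one-dimensional components, each spanned by an invertible element. Two consequences follow.
\begin{enumerate}
\item[(1)] Condition (a) of Definition~\ref{dQCD} holds, since a product of invertible matrices is nonzero.
\item[(2)] Condition (b) holds with $\beta_\ell$, by the commutation rule $X^iY^j\cdot X^kY^l=\varepsilon_\ell^{il-jk}X^kY^l\cdot X^iY^j$.
\end{enumerate}

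\emph{Parts (a) and (b).} Put $\beta_G=\beta_{\ell_1}\otimes\cdots\otimes\beta_{\ell_r}$ on $G$. It is skew-symmetric and non-degenerate because each factor is, as noted after \eqref{productofbicharacters}. Using $M_n(K)\cong M_{\ell_1}(K)\otimes\cdots\otimes M_{\ell_r}(K)$, grade $M_n(K)$ by
\[
\deg(u_1\otimes\cdots\otimes u_r)=(\deg u_1,\ldots,\deg u_r).
\]
Each homogeneous component is spanned by a tensor product of invertible generators, so it is one-dimensional and spanned by an invertible element. Tensor factors in different positions commute, so the commutation factor of a tensor product is the product of the factors' commutation factors. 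That product is exactly $\beta_G$, which gives (b). Nonvanishing of products again follows from invertibility.

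\emph{Part (c).} The one point needing care is the meaning of ``$H\subseteq G$'' when the $k_j$ may repeat among the $\ell_i$. I read the hypothesis, as intended, to mean that $k_1,\dots,k_s$ are $\ell_{i_1},\dots,\ell_{i_s}$ for distinct indices $i_1<\cdots<i_s$. Then $H$ is identified with the subgroup $C_{\ell_{i_1}}^2\times\cdots\times C_{\ell_{i_s}}^2$ of $G$, with the other factors trivial. Define
\[
\beta_H:=\beta_{\ell_{i_1}}\otimes\cdots\otimes\beta_{\ell_{i_s}}.
\]
Because the product formula \eqref{productofbicharacters} vanishes on the trivial coordinates, this is literally $\beta_G|_H$. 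Applying (b) to $m=k_1\cdots k_s$ and the group $H$ then gives the $H$-regular grading of $M_m(K)$ with bicharacter $\beta_H$.

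I expect no real obstacle. The only steps to check are the linear independence of the $X^iY^j$ and this indexing convention in (c).
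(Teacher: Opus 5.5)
Your proposal is correct and follows essentially the same route as the paper: clock and shift matrices on each factor $C_{\ell_t}^2$, the tensor-product grading of $M_n(K)\cong M_{\ell_1}(K)\otimes\cdots\otimes M_{\ell_r}(K)$ with bicharacter $\beta_G=\beta_{\ell_1}\otimes\cdots\otimes\beta_{\ell_r}$, and $\beta_H$ taken as the product of the relevant factors. The differences are only in how the details are checked. You verify non-degeneracy directly on the generators $a,b$, where the paper uses $\det\mathcal{M}_{\beta_G}\neq 0$ via the Kronecker product, and you compute the commutation rule and the nonvanishing condition by hand, where the paper cites Elduque--Kochetov. Your reading of (c) with distinct indices $i_1<\cdots<i_s$ is the one needed for $H\subseteq G$ and $\beta_G|_H$ to make sense.
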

	
	\begin{proof}
		(a): For each $1\leq t\leq r$, let $\varepsilon_{\ell_{t}}$ be a primitive $\ell_{t}$-th root of unity and denote by $a_{t}$, $b_{t}$ the generators of $C_{\ell_{t}}^{2}$. Consider the following skew-symmetric bicharacter of $C_{\ell_{t}}^{2}$:
		\[
		\beta_{\ell_{t}}\colon C_{\ell_{t}}^{2}\times C_{\ell_{t}}^{2} \rightarrow K^{\ast},\quad (a_{t}^{i}b_{t}^{j},a_{t}^{k}b_{t}^{l})\mapsto \varepsilon_{\ell_{t}}^{jk-il}.
		\]
		Then $\beta_{G}:= \beta_{\ell_{1}}\otimes \cdots \otimes\beta_{\ell_{r}}$ is a skew-symmetric bicharacter of $G$. By construction, if $g\in C_{\ell_{t}}^{2}$ and $h\in C_{\ell_{s}}^{2}$, with $t\neq s$, then $\beta_{G}(g,h)=1$. Moreover, since $\cM_{\beta_{G}}=\cM_{\beta_{\ell_{1}}}\otimes \cdots \otimes \cM_{\beta_{\ell_{r}}}$ and, by \cite[Proposition 2.3]{bcommutation}, $\det \cM_{\beta_{\ell_{i}}}\neq 0$ for all $1\leq i\leq r$, it follows that $\det \cM_{\beta_{G}}\neq 0$. Therefore, $\beta_{G}$ is non-degenerate.
		
		(b): Let $R:=M_{n}(K)$. We observe that, since $\ell_{1}\cdots \ell_{r}=n$, we have
		\[
		R\cong M_{\ell_{1}}(K)\otimes \cdots \otimes M_{\ell_{r}}(K).
		\]
		
		For each $1\leq t\leq r$, let $P_{t}$ and $Q_{t}$ be the clock and shift matrices, respectively. Define $X_{t,1}:=I_{\ell_{1}}\otimes \cdots \otimes I_{\ell_{t-1}}\otimes P_{t}\otimes I_{\ell_{t+1}}\cdots \otimes I_{\ell_{r}}$ and $X_{t,2}:=I_{\ell_{1}}\otimes \cdots \otimes I_{\ell_{t-1}}\otimes Q_{t}\otimes I_{\ell_{t+1}}\cdots \otimes I_{\ell_{r}}$. Then, given $g\in G$, with $g=a_{1}^{s_{1,1}}b_{1}^{s_{1,2}}\cdots a_{t}^{s_{t,1}}b_{t}^{s_{t,2}}$, we define
		\[
		R_{g}=\operatorname{Span}_{K}\{X_{1,1}^{s_{1,1}}X_{1,2}^{s_{1,2}}\cdots X_{t,1}^{s_{t,1}}X_{t,2}^{s_{t,2}}\}.
		\]
		By \cite[Theorem 2.15]{Kochetov.Elduque.Book} it follows that $R=\bigoplus_{g\in G} R_{g}$ is a $G$-regular grading for $R$ with skew-symmetric bicharacter $\beta_{G}$.
		
		(c): Finally, given $1\leq s\leq r$, $k_{1},\ldots, k_{s}\in \{\ell_{1},\ldots, \ell_{r}\}$, and $H:=C_{k_{1}}^{2}\times \cdots \times C_{k_{s}}^{2}$, we define
		\[
		\beta_{H}:= \beta_{k_{1}}\otimes \beta_{k_{2}}\otimes \cdots \otimes \beta_{k_{s}}.
		\]
		Then $\beta_{H}$ is a skew-symmetric bicharacter of $H$, $\beta_{H}=\beta_{G}\mid_{H}$, and, using the same arguments as those used for the group $G$, one can see that $M_{m}(K)$ admits a structure of $H$-regular grading with skew-symmetric bicharacter $\beta_{H}$.
	\end{proof}
	
The next example illustrates an application of Lemma \ref{realization} and provides an alternative to the method of \cite[Corollary 62]{B.L.K} for constructing a regular grading on a semisimple algebra whose simple components are all matrix algebras of prime power order.
	
	\begin{Example}
		Consider $A_{1}=M_{n_{1}}(K)$ with $n_{1}=p^{\ell_{1}}$, $A_{2}=M_{n_{2}}(K)$ with $n_{2}=p^{\ell_{2}}$, and $A_{3}=M_{n_{3}}(K)$, with $n_{3}=p^{\ell_{3}}$,  where $p$ is a prime number. Suppose $\ell_{1}\geq \ell_{2}\geq \ell_{3}$. In \cite[Corollary 62]{B.L.K} it was shown that $A:=A_{1}\oplus A_{2}\oplus A_{3}$ admits a quantum regular decomposition by using the Kronecker product. Let us see another way to do it in this example.  Write $n_{1}=p^{\ell_{1}-\ell_{2}}p^{\ell_{2}-\ell_{3}}p^{\ell_{3}}$ and consider 
		\[
		G:=(C_{p^{\ell_{1}-\ell_{2}}})^{2}\times (C_{p^{\ell_{2}-\ell_{3}}})^{2}\times (C_{p^{\ell_{3}}})^{2}.
		\]
		
		By Lemma \ref{realization}, consider the $G$-regular grading on $A_{1}$ with skew-symmetric bicharacter $\beta_{G}$ given by 
		\[
		\beta_{G}(a_{1}^{i_{1}}b_{1}^{j_{1}}a_{2}^{i_{2}}b_{2}^{j_{2}}a_{3}^{i_{3}}b_{3}^{j_{3}}, a_{1}^{k_{1}}b_{1}^{l_{1}}a_{2}^{k_{2}}b_{2}^{l_{2}}a_{3}^{k_{3}}b_{3}^{l_{3}})=\varepsilon_{1}^{j_{1}k_{1}-i_{1}l_{1}}\varepsilon_{2}^{j_{2}k_{2}-i_{2}l_{2}}\varepsilon_{3}^{j_{3}k_{3}-i_{3}l_{3}}
		\]
		where $\varepsilon_{1}$ is a primitive $p^{\ell_{1}-\ell_{2}}$-th root of unity, $\varepsilon_{2}$ is a primitive $p^{\ell_{2}-\ell_{3}}$-th root of unity and $\varepsilon_{3}$ is a primitive $p^{\ell_{3}}$-th root of unity. Now, we set $H:=(C_{p^{\ell_{2}-\ell_{3}}})^{2}\times C_{p^{\ell_{3}}}^{2}$ and $L:=C_{p^{\ell_{3}}}^{2}$, then, by Lemma \ref{realization} we can consider the structure of $H$ and $L$-regular gradings on $A_{2}$ and $A_{3}$ with skew-symmetric bicharacters $\beta_{H}$ and $\beta_{L}$, respectively, such that $\beta_{H}=\beta_{G}\mid_ {H}$ and $\beta_{L}=\beta_{G}\mid_{L}=(\beta_{H})\mid_{L}$. In this way, $A_{1}$, $A_{2}$ and $A_{3}$ become $G$-graded $\beta_{G}$-commutative algebras by setting 
		\[
		(A_{i})_{g}=\begin{cases}
			(A_{i})_{h_{i}}\quad \text{if}\quad g=h_{i}\in H_{i}\\
			0,\quad \text{otherwise}
		\end{cases},\quad\quad 1\leq i\leq 3
		\]
		and for any $g\in G$ if we put $A_{g}=(A_{1})_{g}\oplus (A_{2})_{g}\oplus (A_{3})_{g}$, it follows that $A=\oplus_{g\in G}A_{g}$ is a $G$-graded regular algebra with skew-symmetric bicharacter $\beta_{G}$. 
	\end{Example}

    \begin{Lemma}\label{lYB}
\begin{enumerate}
  \item Let  $G$ be a group, $R$ be a regularly $G$-graded algebra, $S$ any $G$-graded subalgebra of $R$. Then $R\oplus S$ is a $G$-graded regular algebra.
  \item Let $R$ be a  regularly $G$-graded algebra with skew-symmetric bicharacter $\beta$. Suppose that $S_{1},\ldots,S_{r}$ are $G$-graded subalgebras of $R$. Then each $S_{i}$ is a $G$-graded $\beta$-commutative algebra and $R\oplus S_{1}\oplus\cdots\oplus S_{r}$ is a $G$-graded regular algebra with skew-symmetric bicharacter $\beta$.
\end{enumerate}
  \end{Lemma}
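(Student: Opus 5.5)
The plan is to build the grading on the direct sum directly from the grading on $R$: put $R$ in the $R$-slot and $S$ (or each $S_i$) in its own slot. Then verify the two conditions of Definition \ref{dQCD}, namely $\beta$-commutativity (condition (b)) and non-vanishing of products (condition (a)). Part (1) is the special case $r=1$ of part (2), so I would prove (2) and deduce (1).

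First I record that each $S_i$ is a graded $\beta$-commutative algebra. Since $S_i$ is a $G$-graded subalgebra, $S_i=\bigoplus_{g\in G}(S_i)_g$ with $(S_i)_g=S_i\cap R_g$. For $a\in (S_i)_g$ and $b\in (S_i)_h$ the identity $ab=\beta(g,h)ba$ already holds in $R$, hence also in $S_i$.

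Next I grade $A:=R\oplus S_1\oplus\cdots\oplus S_r$ by setting
\[A_g:=R_g\oplus (S_1)_g\oplus\cdots\oplus (S_r)_g.\]
This is a $G$-grading, because multiplication in $A$ is componentwise. For commutativity, take $a=(a_0,a_1,\dots,a_r)\in A_g$ and $b=(b_0,\dots,b_r)\in A_h$. The product $ab$ is computed componentwise, and in each component the relation $a_ib_i=\beta(g,h)b_ia_i$ holds with the same scalar $\beta(g,h)$. This scalar is independent of the component, which is the key point, so $ab=\beta(g,h)ba$ in $A$.

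It remains to check the support and condition (a). Regularity of $R$ gives that every $R_g$ with $g$ in the support is nonzero, and that every product $R_{g_1}\cdots R_{g_n}$ of nonzero components is nonzero. The support of $A$ equals the support of $R$, since the $(S_i)_g$ can only be nonzero where $R_g\neq 0$. Then
\[A_{g_1}\cdots A_{g_n}\supseteq R_{g_1}\cdots R_{g_n}\oplus 0\oplus\cdots\oplus 0\neq 0.\]
If the paper's convention takes the grading group to be exactly the support, nothing further is needed. Finally, the bicharacter of $A$ is $\beta$ itself, because the decomposition matrix of $A$ coincides with that of $R$.

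The only delicate point is the bookkeeping about the support: the grading on $A$ must have the same support as $R$, so that condition (a) is inherited from $R$ alone rather than needing to be proved for the $S_i$. I expect no real obstacle beyond stating this carefully.
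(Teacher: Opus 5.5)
Your proposal is correct and is exactly the routine verification the paper means when it says both claims are immediate from the definitions. You grade the sum componentwise and observe that the scalar $\beta(g,h)$ is the same in every component. Condition (a) is inherited from the $R$-summand, because $(S_i)_g=S_i\cap R_g$ forces the support of the sum to coincide with that of $R$.
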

\begin{proof}
  Both claims are immediate from the definitions.
\end{proof}

Now if $n\in\mathbb{N}$ and $n=p_1\cdots p_s$, where each $p_i$ is a prime number, $G=C_{p_1}^2\times\cdots\times C_{p_s}^2$, then by Lemma \ref{realization}, if we write $M_n(K)=M_{p_1}(K)\otimes\cdots\otimes M_{p_s}(K)$, it follows that $M_{n}(K)$ has a structure of $G$-graded regular algebra with a skew-symmetric bicharacter given by $\beta_{G}:=\beta_{p_{1}}\otimes\cdots\otimes\beta_{p_{s}}$. Now if $m\mid n$ and $m=p_{1}\cdots p_{t}$ where $t\le s$ then $M_{m}(K)=M_{p_1}(K)\otimes\cdots\otimes M_{p_t}(K)$ is isomorphic to a $G$-graded subalgebra of $M_{n}(K)$ via the graded embedding 
\[
x_1\otimes\cdots\otimes x_t\to x_1\otimes\cdots\otimes x_t\otimes \underbrace{1\otimes\cdots\otimes 1}_{s-t}\mbox{where each $x_i$ is homogeneous}.
\]
Using Lemma \ref{lYB}, we get 
\begin{Proposition} 
		\label{realization.1}
		Let $R=M_{n_{1}}(K)\oplus \cdots \oplus M_{n_{r}}(K)$ be a finite-dimensional semisimple algebra such that $n_{i}\mid n_{1}$, for any $1\leq i\leq r$. Then, $R$ admits the structure of a $G$-graded regular algebra for some finite abelian group $G$. 
\end{Proposition}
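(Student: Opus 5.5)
We plan to build the grading on the largest component $M_{n_1}(K)$ and then push it down to every other component through graded embeddings, so that Lemma \ref{lYB} applies directly.

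\emph{Step 1: grade the largest component.} Factor $n_1=p_1p_2\cdots p_s$ into primes, repetitions allowed. Put $G=C_{p_1}^2\times\cdots\times C_{p_s}^2$. By Lemma \ref{realization}(a),(b), $M_{n_1}(K)\cong M_{p_1}(K)\otimes\cdots\otimes M_{p_s}(K)$ carries a $G$-regular grading with the non-degenerate skew-symmetric bicharacter $\beta_G=\beta_{p_1}\otimes\cdots\otimes\beta_{p_s}$. Regularity means condition (a) of Definition \ref{dQCD} holds: every product of homogeneous components is nonzero, because the homogeneous components are spanned by invertible monomials in the clock and shift matrices.

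\emph{Step 2: embed each smaller component as a graded subalgebra.} Fix $i$. Since $n_i\mid n_1$, the prime factorization of $n_i$ is a submultiset of that of $n_1$. After reordering the factors $p_j$, which only permutes the direct factors of $G$, write $n_i=p_{j_1}\cdots p_{j_t}$ with distinct indices $j_1,\ldots,j_t$. Map $M_{n_i}(K)\cong M_{p_{j_1}}(K)\otimes\cdots\otimes M_{p_{j_t}}(K)$ into $M_{n_1}(K)$ by placing each tensor factor in its slot and putting $1$ in the remaining slots, as in the remark preceding the proposition. This map is an injective unital algebra homomorphism. It sends each homogeneous monomial of the grading of $M_{n_i}(K)$ from Lemma \ref{realization}(c), indexed by $H_i=\prod_k C_{p_{j_k}}^2\le G$, to a homogeneous element of the same degree in $M_{n_1}(K)$. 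Its image $S_i$ is therefore a $G$-graded subalgebra isomorphic to $M_{n_i}(K)$. If $n_i=1$, take $S_i=K\cdot 1$, which lies in degree $e$.

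\emph{Step 3: assemble.} Apply Lemma \ref{lYB}(2) with $R=M_{n_1}(K)$ and the subalgebras $S_2,\ldots,S_r$. This shows that $M_{n_1}(K)\oplus S_2\oplus\cdots\oplus S_r\cong R$ is $G$-graded regular with bicharacter $\beta_G$. Equivalently, set $R_g=(M_{n_1})_g\oplus\bigoplus_i (S_i)_g$. The commutation rule holds componentwise because each $S_i$ inherits $\beta_G$-commutativity from $M_{n_1}(K)$. Condition (a) holds because the projection of $R_{g_1}\cdots R_{g_k}$ onto the first summand equals $(M_{n_1})_{g_1}\cdots (M_{n_1})_{g_k}\neq 0$.

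The main point to handle carefully is Step 2. One must check that the reindexing of prime factors is compatible with the grading, and that the embedding really is degree-preserving. If the ordering causes trouble, we can instead choose the factorization of $n_1$ adapted to $n_i$, since Lemma \ref{realization}(c) allows any submultiset of the factors.
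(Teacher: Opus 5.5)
Your proposal is correct and follows the paper's argument. You grade $M_{n_1}(K)$ by $G=C_{p_1}^2\times\cdots\times C_{p_s}^2$ via Lemma~\ref{realization}, embed each $M_{n_i}(K)$ as a $G$-graded subalgebra by filling the unused tensor slots with identities, and conclude with Lemma~\ref{lYB}; your explicit checks of the commutation rule and of condition (a), via the projection onto the first summand, supply details the paper leaves as immediate from the definitions.
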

	
Now we are in a position to prove the main result of the paper.

	\begin{proof}[Proof of Theorem \ref{main.theorem}] Suppose that $R=M_{n_{1}}(K)\oplus \cdots \oplus M_{n_{r}}(K)$ admits a regular quantum commutative decomposition $\mathscr{R}\colon R=R_{1}\oplus \cdots \oplus R_{m}$ with multiplication function $\theta(\cdot,\cdot)$. If there exist $1\leq i\neq j\leq m$ such that $\theta(i,\ell)=\theta(j,\ell)$ for all $1\leq \ell\leq m$, then we may replace the two summands $R_{i}$ and $R_{j}$ by $S:=R_{i}\oplus R_{j}$. Hence, we may assume without loss of generality that $\mathscr{R}$ is minimal. Denote by $e_{1}$,\dots, $e_{r}$ the primitive central idempotents associated to the semisimple decomposition of $R$. Of course, we can take $n_{1}\geq n_{i}$, for all $1\leq i\leq r$. 
		By \cite[Proposition 46]{B.L.K} and \cite[Theorem 52]{B.L.K}, for any $1\leq i\leq r$, there exists $i_{1}$,\dots $i_{k}$ such that 
		\[
		M_{n_{i}}(K)=R_{i_{1}}e_{i}\oplus \cdots \oplus R_{i_{k}}e_{i}
		\]
		is a regular grading for some finite abelian group $H_{i}$, and  $|H_{i}|=k=n_{i}^{2}$. Since $\mathscr{R}$ is minimal of quantum length $m$ and $n_{1}\geq n_{i}$ for all $1\leq i\leq r$, Proposition \ref{necessary.condition} yields $|H_1|=m$. Furthermore, for any $1\leq i\leq r$, if $\beta_i$ denotes the skew-symmetric bicharacter on $H_i$ corresponding to the $H_{i}$-regular grading  of $M_{n_{i}}(K)$, then $\cM_{\beta_i}$ is a submatrix of $\cM_{\beta_1}$ and by Corollary \ref{Bicharpres} we conclude $n_{i}\mid n_{1}$. The converse is true because of Proposition \ref{realization.1}.
	\end{proof}
	\section{Applications}
	
	
	It was shown by Bahturin and Parmenter in \cite[Corollary 4.1]{Bahturin.Parmenter} that, for any finite group $G$, the group algebra $KG$ admits a $Q$-grading by a finite abelian group $Q$ such that it becomes a $\beta$-commutative algebra for an appropriate skew-symmetric bicharacter $\beta$ of $Q$. Nevertheless, this is no longer true in the context of regular gradings. For instance, as shown in \cite[Example 59]{B.L.K}, the group algebra $KS_{5}$ does not admit the structure of a regular quantum commutative algebra.
	
	Given a finite group $G$ we define the following set:
	\[
	\mathbf{d}(G):=\{\deg(\chi)\mid \text{$\chi$ is an irreducible character of $G$}\}.
	\]
	We say that $G$ is a \emph{peak group} if $\mathbf{d}(G)$ has the greatest element $d$ with respect to the divisibility ordering. A character $\chi$ with $\deg(\chi)=d$ is called a \emph{peak character}. The next result, which follows directly from Theorem~\ref{main.theorem}, provides a necessary and sufficient condition under which a group algebra admits a regular grading by a finite abelian group.

	\begin{Proposition}
		\label{criterion.group.algebra}
		Let $G$ be a finite group. The group algebra $KG$ admits a regular quantum commutative decomposition if and only if $G$ is a peak group. \hfill$\Box$
	\end{Proposition}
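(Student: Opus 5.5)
The plan is to reduce the statement directly to Theorem~\ref{main.theorem} by identifying the Wedderburn decomposition of $KG$. Since $K$ is algebraically closed of characteristic $0$, Maschke's Theorem shows that $KG$ is semisimple. By Wedderburn's Theorem together with algebraic closedness, this gives
\[
KG\cong M_{n_1}(K)\oplus\cdots\oplus M_{n_r}(K),
\]
where $r$ is the number of conjugacy classes of $G$. Standard character theory over an algebraically closed field of characteristic zero matches the simple components bijectively with the irreducible characters $\chi_1,\ldots,\chi_r$ of $G$, with $n_i=\deg(\chi_i)$. Consequently
\[
\{n_1,\ldots,n_r\}=\mathbf{d}(G).
\]

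Next we apply Theorem~\ref{main.theorem} to this algebra. It says that $KG$ admits a regular quantum commutative decomposition if and only if there is an index $s$ with $n_i\mid n_s$ for all $i$. Because $\{n_i\}=\mathbf{d}(G)$, this condition says exactly that $\mathbf{d}(G)$ contains an element divisible by every element of $\mathbf{d}(G)$. Such an element is the greatest element with respect to divisibility, and conversely a greatest element must be one of the $n_s$. This is precisely the definition of $G$ being a peak group.

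There is one minor point to address. A regular quantum commutative decomposition is a property of the algebra only up to isomorphism: transporting the subspaces $R_s$ and the function $\beta$ along an algebra isomorphism preserves conditions (a) and (b) of Definition~\ref{dQCD}. So we may replace $KG$ by its Wedderburn form. I do not expect a genuine obstacle here. The only care needed is to note that the degrees of the irreducible characters over $K$ coincide with the classical complex character degrees, which holds because $K$ is algebraically closed of characteristic $0$.
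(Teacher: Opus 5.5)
Your proposal is correct and is essentially the paper's argument: the paper states this proposition as following directly from Theorem~\ref{main.theorem}, via Maschke's theorem and the Wedderburn decomposition of $KG$ into matrix algebras of sizes equal to the irreducible character degrees. Your extra remarks, that the decomposition transports along algebra isomorphisms and that character degrees over $K$ agree with the complex ones, are exactly the routine details the paper leaves implicit.
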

	Given a finite group $G$ with a neutral element $e\in G$, let us recall some basic definitions.
	\begin{enumerate}
		\item[(1)]  We define the \emph{upper central series} of $G$ as the sequence of normal subgroups of $G$:
		\[
		\{ e\}=Z_{0}\subseteq Z_{1}\subseteq\cdots \subseteq Z_{n}\subseteq\cdots  
		\]
		where  $Z_{1}:=Z(G)$, and for $j\geq 1$, $Z_{j}$ is defined inductively by the condition $Z_{j+1}/Z_{j}=Z(G/Z_{j})$. We say that $G$ is \emph{nilpotent} if there exists $n\in \mathbb{N}$ such that $Z_{n}=G$.
		\item[(2)] A finite group $G$ is called solvable if it has a finite chain of subgroups 
		\[
		\{e\}=G_{0}\subseteq G_{1}\subseteq \cdots \subseteq G_{n}=G
		\]
		such that $G_{i}$ is a normal subgroup of $G_{i+1}$ for any $0\leq i\leq n-1$ and each factor group $G_{i+1}/G_{i}$ is abelian, for any $0\leq i\leq n-1$.
		\item[(3)] A solvable group $G$  is called supersolvable if it has a series, as above, such that each $G_{i+1}/G_{i}$ is cyclic for any $1\leq i\leq n-1$.
       \item[(4)] {Given characters $\chi$ and $\varphi$ of $G$, the Hermitian inner product of $\chi$ and $\varphi$ is defined by
    \[
    \langle \chi\mid \varphi\rangle=\dfrac{1}{|G|}\sum_{x\in G}\chi(x)\varphi(x^{-1}).
    \]

    \item[(5)] Given a normal subgroup $N\subseteq G$ and a character $\theta$ of $N$, and $g\in G$, we define the \emph{conjugate character} $\theta^{g}$ by
    \[
    \theta^{g}(x)=\theta(gxg^{-1}),\quad \text{for all}\quad x\in N.
    \]

    \item[(6)] Given a normal subgroup $N\subseteq G$ and an irreducible character $\theta$ of $N$, the subgroup
    \[
    I_{G}(\theta)=\{g\in G\mid \theta^{g}=\theta\}
    \]
    is called the \emph{inertia group} of $\theta$ in $G$. Clearly, $N\subseteq I_{G}(\theta)$.

    \item[(7)] Let $H\subseteq G$ be a subgroup of $G$. Given a nontrivial irreducible character $\lambda$ of $H$, we define the induced character $\lambda^{G}$ of $G$ by
    \[
    \lambda^{G}(g)=\dfrac{1}{|H|}\sum_{x\in G}\lambda^{o}(xgx^{-1}),
    \]
    where
    \[
    \lambda^{o}(g)=
    \begin{cases}
    \lambda(g), & g\in H,\\
    0, & g\notin H.
    \end{cases}
    \]
    \item[(8)]  If $H$ is a subgroup of $G$ and $\rho$ is a representation of $G$ with character $\chi$ we denote by $\chi_{H}$ the character of the restriction representation $\rho_{H}$. }
	\end{enumerate}

	\subsection{Nilpotent groups}

	Let $G$ be a finite nilpotent group of order $|G|=p_{1}^{\ell_{1}}\cdots p_{n}^{\ell_{n}}$, where $p_{1}$,\dots, $p_{n}$ are distinct prime numbers and $\ell_{1}$,\dots $\ell_{n}\in \mathbb{N}$ . By \cite[Theorem 6.12]{Curtis.Reiner} 
	\[
	G\cong P_{1}\times \cdots \times P_{n}
	\]
	where $P_{i}$ is the Sylow $p_{i}$-subgroup of $G$, for all $1\leq i\leq n$. 
	Any irreducible character $\chi$  of $G$ decomposes as the tensor product  $\chi\cong \lambda_{1}\otimes \cdots \otimes \lambda_{n}$, where $\lambda_{i}$ is an irreducible character of $P_{i}$, for all $1\leq i\leq n$. In particular
	\[
	\deg(\chi)=\deg(\lambda_{1})\cdots \deg(\lambda_{n}). 
	\]
	Since for any $1\leq i\leq n$, $\deg(\lambda_{i})\mid |P_{i}|$ and $|P_{i}|=p_{i}^{\ell_{i}}$, we conclude that there exists $k_{1}$,\dots, $k_{n}\in \mathbb{Z}_{\geq 0}$ such that
	\[
	\deg(\chi)=p_{1}^{k_{1}}\cdots p_{n}^{k_{n}}. 
	\]
	Now, let $\psi$ be a character of $G$ such that $\deg(\psi)=\max \mathbf{d}(G)$, and write  $\deg(\psi)=p_{1}^{t_{1}}\cdots p_{n}^{t_{n}}$, for some $t_{1}$,\dots, $t_{n}\in \mathbb{Z}_{\geq 0}$. 
	
	We claim that for any irreducible character $\chi$ of $G$ we have $\deg(\chi)\mid \deg(\psi)$. Indeed,  suppose there exists an irreducible character $\widetilde{\psi}$ of $G$ such that $\deg(\widetilde{\psi})\nmid \deg(\psi)$. In this case, if $\deg(\widetilde{\psi})=p_{1}^{\widetilde{t}_{1}}\cdots p_{n}^{\widetilde{t}_{n}}$, then there exists $1\leq j\leq n$, such that $\widetilde{t}_{j}>t_{j}$. Now, we write
	\[
	\psi=\lambda_{1}\otimes \cdots \otimes \lambda_{n}
	\]
	and 
	\[
	\widetilde{\psi}=\widetilde{\lambda}_{1}\otimes \cdots \otimes \widetilde{\lambda}_{n}
	\]
	where for any $1\leq i\leq n$,  $\lambda_{i}$, $\widetilde{\lambda}_{i}$ are irreducible characters of $P_{i}$ with $\deg(\lambda_{i})=p_{i}^{t_{i}}$ and $\deg(\widetilde{\lambda}_{i})=p_{i}^{\widetilde{t}_{i}}$.  Let us define the following character of $G$
	\[
	\phi:=  \lambda_{1}\otimes \cdots \lambda_{j-1}\otimes \widetilde{\lambda}_{j}\otimes \lambda_{j+1}\otimes\cdots \otimes \lambda_{n}.
	\]
	By \cite[Theorem 10]{serre1977linear},  $\phi$ is an irreducible character of $G$ and 
	\begin{align*}
		\deg(\phi) &=p_{1}^{t_{1}}\cdots p_{j-1}^{t_{j-1}}p_{j}^{\widetilde{t}_{j}} p_{j+1}^{t_{j+1}}\cdots p_{n}^{t_{n}}\\
		&> 	p_{1}^{t_{1}}\cdots p_{j-1}^{t_{j-1}}p_{j}^{t_{j}}p_{j+1}^{t_{j+1}}\cdots p_{n}^{t_{n}}=\deg(\psi),
	\end{align*}
	which is a contradiction. Thus, for any character $\chi$  of $G$ we have $\deg(\chi)\mid \deg(\psi)$. 
	
	The above argument can be stated as follows.
	
	\begin{Proposition}
		\label{pNG} Any finite nilpotent group is a peak group.
	\end{Proposition}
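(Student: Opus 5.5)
The plan is to reduce to the Sylow factors and then use a maximality argument, essentially formalizing the discussion preceding the statement. Let $G$ be finite nilpotent with $|G|=p_{1}^{\ell_{1}}\cdots p_{n}^{\ell_{n}}$. By \cite[Theorem 6.12]{Curtis.Reiner} we write $G\cong P_{1}\times\cdots\times P_{n}$, with $P_i$ the Sylow $p_i$-subgroup. By \cite[Theorem 10]{serre1977linear}, the irreducible characters of $G$ are exactly the outer tensor products $\lambda_{1}\otimes\cdots\otimes\lambda_{n}$ with $\lambda_i\in\operatorname{Irr}(P_i)$. Every such choice gives an irreducible character, and every irreducible character arises this way. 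Hence
\[
\mathbf{d}(G)=\{d_1\cdots d_n \mid d_i\in \mathbf{d}(P_i)\}.
\]

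The first key step is that each $\mathbf{d}(P_i)$ consists of powers of $p_i$. This holds because character degrees divide the group order, which is a standard fact. Consequently $\mathbf{d}(P_i)$ is a finite chain under divisibility, so it has a greatest element $p_i^{t_i}$ with respect to divisibility, namely its numerical maximum. In particular every $p$-group is trivially a peak group.

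The second step is to set $d:=p_1^{t_1}\cdots p_n^{t_n}$. This is realized as the degree of $\lambda_1\otimes\cdots\otimes\lambda_n$, where each $\lambda_i$ is chosen of maximal degree in $P_i$, so $d\in\mathbf{d}(G)$. Any $\chi\in\operatorname{Irr}(G)$ has degree $\prod p_i^{k_i}$ with $p_i^{k_i}\in\mathbf{d}(P_i)$, hence $k_i\le t_i$ for all $i$, and so $\deg\chi\mid d$. Therefore $d$ is the greatest element of $\mathbf{d}(G)$ under divisibility, and $G$ is a peak group.

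This direct construction avoids the contradiction argument given in the text. There is no serious obstacle. The only point needing care is using both directions of the tensor product description of $\operatorname{Irr}(P_1\times\cdots\times P_n)$: surjectivity to express every degree as a product of factor degrees, and irreducibility of arbitrary tensor products to realize $d$. Both are standard, and I would simply cite them.
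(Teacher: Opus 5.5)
Your proof is correct and follows essentially the same route as the paper: decompose $G$ as the direct product of its Sylow subgroups, use the tensor-product description of $\operatorname{Irr}(P_1\times\cdots\times P_n)$, and use that each $\mathbf{d}(P_i)$ consists of powers of $p_i$. The only difference is cosmetic: the paper takes a character of numerically maximal degree and reaches a contradiction by swapping one tensor factor, whereas you directly exhibit $d=p_1^{t_1}\cdots p_n^{t_n}$ as the product of the factorwise maxima, which is slightly cleaner.
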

	\begin{Remark}
		The converse of the above Proposition is not true. Indeed, $S_3$ is not nilpotent but peak; the group algebra $KS_{3}$ admits a regular quantum commutative decomposition.
	\end{Remark}

	\subsection{Metacyclic groups}
{
    Recall that a finite group $G$ is called \emph{metacyclic} if it contains a normal cyclic subgroup $A$ such that the quotient group $G/A$ is also cyclic.

\begin{Example}
The following are classical examples of metacyclic groups.
\begin{enumerate}
    \item[(a)] The \emph{dihedral group} of order $2n$, $D_{2n}=\langle a,b \mid a^{n}=b^{2}=1,\; bab=a^{-1}\rangle.$
    \item[(b)] The \emph{dicyclic group} of order $4n$, $ DC_{4n}=\langle a,b,c \mid a^{2}=b^{2}=c^{n}=abc\rangle.$
    \item[(c)] Every group of \emph{square-free order}, that is, every finite group whose order is divisible by no prime square.
\end{enumerate}

For further background on metacyclic groups, we refer the reader to
\cite{Isaacs2008,Robinson1982,Rotman}.
\end{Example}
\begin{Theorem}
Any metacyclic group is a peak group.
\end{Theorem}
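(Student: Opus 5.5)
Let $G$ be metacyclic, with $A\trianglelefteq G$ cyclic and $G/A$ cyclic. The plan is to describe every irreducible degree explicitly via Clifford theory relative to $A$, and then exhibit a single degree that all the others divide. Since $A$ is abelian, every irreducible character $\theta$ of $A$ is linear. Let $T=I_G(\theta)$ be its inertia group; then $A\subseteq T$ and $T/A$ is a subgroup of the cyclic group $G/A$, hence cyclic.

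The key step is to show that $\theta$ extends to $T$. A character invariant in $T$ extends whenever $T/A$ is cyclic: this is the standard Clifford/Gallagher fact, e.g. Isaacs, Corollary 11.22. By Gallagher's theorem, the irreducible characters of $T$ lying over $\theta$ are $\hat\theta\mu$, with $\mu$ running over the irreducible characters of $T/A$. These are all linear, since $T/A$ is abelian. By the Clifford correspondence, induction to $G$ gives irreducible characters of degree $[G:T]$. Consequently
\[
\mathbf{d}(G)=\{[G:I_G(\theta)] \mid \theta\in \operatorname{Irr}(A)\}.
\]
Every such degree divides $[G:A]$, and it equals the size of the $G$-orbit of $\theta$.

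The next step is to compare these orbit sizes. The action of $G$ on $\operatorname{Irr}(A)$ factors through $G/A=\langle xA\rangle$, a cyclic group acting by automorphisms on $\operatorname{Irr}(A)\cong A$, itself cyclic of order $n$. Let $\sigma$ be the automorphism induced by $x$, say $\sigma(\lambda)=\lambda^{r}$ with $r$ a unit mod $n$. The stabilizer of $\theta$ is $\langle x^{k}\rangle A$, where $k$ is the order of $\sigma$ on $\theta$; hence $[G:I_G(\theta)]=k$, the orbit length of $\theta$ under $\langle\sigma\rangle$. So $\mathbf{d}(G)$ is exactly the set of orbit lengths of a single permutation $\sigma$ of the finite set $\operatorname{Irr}(A)$.

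For a generator $\lambda$ of $\operatorname{Irr}(A)$, the orbit length of $\lambda$ is the order $N$ of $\sigma$ as an automorphism, because an automorphism fixing a generator is trivial. Any other orbit length $k$ satisfies $\sigma^{N}=\mathrm{id}$, so $k\mid N$. Thus $N=[G:I_G(\lambda)]$ is the greatest element of $\mathbf{d}(G)$ under divisibility, and $G$ is a peak group.

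The main obstacle is justifying the extension step. I would either cite the cyclic-quotient extension theorem, or argue directly: in $T=\langle A,y\rangle$, pick $\hat\theta(a y^{i})=\theta(a)\omega^{i}$ with $\omega^{m}=\theta(y^{m})$, where $m=[T:A]$ and $y^m\in A$. One must check that this is a homomorphism, which uses the $y$-invariance of $\theta$. The remaining care is the claim that an automorphism of a cyclic group is determined by its value on a generator, which is immediate.
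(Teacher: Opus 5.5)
Your proposal is correct and takes essentially the paper's route: the irreducible degrees are the orbit lengths of the cyclic group $G/A$ acting on $\operatorname{Irr}(A)$, and the orbit of a generator has length equal to the order of the acting automorphism, which every other orbit length divides (the paper phrases this as $\operatorname{ord}_{m/\gcd(k,m)}(s)\mid\operatorname{ord}_m(s)$, working with conjugacy classes in $N$ instead of characters). You also explicitly justify that $\mathbf{d}(G)$ consists exactly of these orbit lengths, using the extension of $\theta$ to $I_G(\theta)$ (possible because $I_G(\theta)/A$ is cyclic), then Gallagher's theorem and the Clifford correspondence; the paper only asserts this step.
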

\begin{proof}
 Since $G$ is a metacyclic group we have 
 \[
 G=\langle a,b\,|\, a^m=1, b^n=a^r, bab^{-1}=a^s\rangle.
 \]
To know the degrees of the characters of irreducible representations of $G$ in which $a$ acts nontrivially, we need to compute the lengths of the orbits of the action of $\langle b\rangle$  on the group $\widehat{N}$ of $1$-dimensional characters of the normal cyclic subgroup $N=\langle a\rangle$ of $G$.  The mapping $a$ to the character $\chi_0$ such that $\chi_0(a)=e^{\frac{2\pi i}{m}}$ extends to an isomorphism of $\langle b\rangle$-groups. So the degrees of the characters of such irreducible representations are just the sizes of the conjugacy classes $\mathrm{cl}(a^k)$ of $G$ inside $N$.  Let $d_{0}$ denote the size of the conjugacy class of $a$. Since conjugation by $b$ acts on $N=\langle a\rangle$ via $a\mapsto a^s$, i.e., $bab^{-1}=a^s$, we have $d_0=\operatorname{ord}_{m}(s)$, where $\operatorname{ord}_m(s)$ denotes the multiplicative order of $s$ modulo $m$. In general, for $a^k\in N$, let $d_k$ denote the size of its conjugacy class. Then $d_{k}$ is the least positive integer $d$ such that $a^{ks^d}=a^k$. Equivalently, $m\mid k(s^d-1)$, and hence $
d_{k}=\operatorname{ord}_{m/\gcd(k,m)}(s)$. Since $m/\gcd(k,m)$ divides $m$, the congruence $s^{d_0}\equiv 1\pmod m$ implies $s^{d_0}\equiv 1\pmod{m/\gcd(k,m)}$. Therefore, by the definition of the multiplicative order, $
d_k=\operatorname{ord}_{m/\gcd(k,m)}(s)\mid d_0$. Thus, the size of every conjugacy class contained in $N$ divides the size of the conjugacy class of $a$.
\end{proof}	

\begin{Corollary} Dihedral groups, dicyclic groups, and groups of square-free order are peak groups.
\end{Corollary}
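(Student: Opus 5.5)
The Corollary follows once each of the three families is identified as metacyclic; the preceding Theorem then shows that every metacyclic group is a peak group. By Proposition~\ref{criterion.group.algebra}, the group algebras of all these groups therefore admit regular quantum commutative decompositions. I will treat the three families in turn.

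For the dihedral group $D_{2n}=\langle a,b\mid a^n=b^2=1,\ bab=a^{-1}\rangle$, take $A=\langle a\rangle$. It is cyclic of order $n$ and has index $2$, so it is normal and $D_{2n}/A\cong C_2$ is cyclic. Thus $D_{2n}$ is metacyclic.

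For the dicyclic group $DC_{4n}$, put $x=c$ and $y=a$. From the relations one obtains the standard presentation
\[
\langle x,y\mid x^{2n}=1,\ y^2=x^n,\ yxy^{-1}=x^{-1}\rangle .
\]
Take $A=\langle x\rangle$. It is cyclic of order $2n$, has index $2$, and is therefore normal, with cyclic quotient of order $2$. Hence $DC_{4n}$ is metacyclic. The only small care needed is to check from the given presentation that $c$ has order $2n$ and that $\langle c\rangle$ has index $2$. Both are standard facts for the dicyclic group of order $4n$.

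For groups of square-free order, I will invoke the classical theorem of Hölder, Burnside and Zassenhaus. It says that a finite group all of whose Sylow subgroups are cyclic is metacyclic. In fact such a group has the form $\langle a,b\mid a^m=b^n=1,\ bab^{-1}=a^r\rangle$ with $\gcd(m,n)=1$; see \cite{Robinson1982}. When $|G|$ is square-free, every Sylow subgroup has prime order and is therefore cyclic, so $G$ is metacyclic. This step is the only nontrivial input, and I will simply cite it, since the paper already lists square-free order groups among the classical examples of metacyclic groups.
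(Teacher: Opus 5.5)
Your proposal is correct and follows the paper's route: the paper gets the Corollary directly from the theorem that every metacyclic group is a peak group, combined with its earlier list of dihedral, dicyclic and square-free-order groups as metacyclic examples, and you supply the justification for that list (an index-two cyclic subgroup in $D_{2n}$ and $DC_{4n}$, and the H\"older--Burnside--Zassenhaus theorem for square-free order). In the dicyclic case you do not need the exact order of $c$, since the relations $yxy^{-1}=x^{-1}$ and $y^2=x^n$ already show that $\langle x\rangle$ is normal with a cyclic quotient of order at most $2$.
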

 }  
	
	\subsection{Supersolvable groups} 
	
	Since nilpotent groups and metacyclic groups are examples of supersolvable groups, it is natural to ask whether for every supersolvable group $G$, $KG$ admits a regular quantum commutative decomposition. However, this is not the case.
	
	\begin{Example} In the GAP library, consider $G=\texttt{SmallGroup}(54,8)$, which is a group of order $54$. Then, by \cite[Corollary 4.5]{pinnock1998supersolubility}, $G$ is a supersolvable group. However, using GAP (\cite{GAP4}), one can verify that $\mathbf{d}(G)=\{1,2,3\}$, and therefore by Corollary \ref{criterion.group.algebra},  $KG$ does not admit a regular quantum commutative decomposition.
	\end{Example}
	\subsection{Solvable groups}
	
	As seen above, the group algebra of a supersolvable, hence solvable, group does not necessarily admit a regular quantum commutative decomposition.  Another important example is $KS_{4}$, which does not admit a regular quantum commutative decomposition because $\mathbf{d}(S_{4})=\{1,2,3\}$.   The goal of this section is to investigate the converse of this result.

	\begin{Lemma}
		\label{two.cases}
		Let $G$ be a finite group and let $\psi$ be a peak character of $G$. Suppose that  $\psi$ satisfies one of the two conditions
		\begin{enumerate}
			\item[(a)] $\deg(\psi)=p^{n}$
			\item[(b)] $\deg(\psi)=p^{n}q^{m}$
		\end{enumerate}	
		where $p$ and $q$ are distinct prime numbers and $n$,$m\in \mathbb{N}$. Then $G$ is a solvable group.
	\end{Lemma}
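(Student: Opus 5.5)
The plan is to reduce to known solvability criteria in terms of character degrees. The key observation is that if $G$ is a peak group with peak character $\psi$, then every $\chi\in\operatorname{Irr}(G)$ has $\deg(\chi)\mid\deg(\psi)$. Therefore the set of primes dividing some element of $\mathbf{d}(G)$ is contained in the set of prime divisors of $\deg(\psi)$. In case (a), every irreducible character degree of $G$ is a power of $p$. In case (b), every degree has the form $p^{i}q^{j}$.

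For (a) I would invoke the Isaacs--Passman theorem: if every irreducible character degree of $G$ is a power of a fixed prime $p$, then $G$ has a normal abelian $p$-complement. (Equivalently, by Thompson's theorem, if $p$ divides every nonlinear degree then $G$ is solvable.) So $G=N\rtimes P$ with $N$ abelian and $P$ a $p$-group. Then $G/N\cong P$ is nilpotent and $N$ is abelian, and $G$ is solvable.

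For (b) I would use the result that if $\mathbf{d}(G)$ involves at most two primes, then $G$ is solvable. This can be deduced from the classification of finite simple groups via the fact that every nonabelian simple group has an irreducible character degree divisible by at least three primes. The argument goes by a minimal counterexample. Suppose $G$ is nonsolvable and take a nonabelian composition factor $S$, realized as a chief factor $M/N=S_1\times\cdots\times S_k$ with $S_i\cong S$. Choose $\theta\in\operatorname{Irr}(S)$ whose degree has at least three prime divisors, and form $\theta\otimes\cdots\otimes\theta\in\operatorname{Irr}(M/N)$. By Clifford theory, any $\chi\in\operatorname{Irr}(G/N)$ lying over it has degree divisible by $\theta(1)$. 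This contradicts the fact that all degrees divide $p^nq^m$.

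The main obstacle is justifying the simple-group input in (b). That statement depends on the classification (it is essentially the work of Manz, Staszewski and Willems, or Gallagher's refinement of Thompson's theorem). I would cite it rather than reprove it, and check that the degree-divisibility passes correctly from $G/N$ to $G$ (inflation preserves degrees, so this is immediate). Alternatively, for (b) one could first try Burnside's $p^aq^b$ theorem after showing that $|G|$ is divisible only by $p$ and $q$. But this is false in general (e.g. abelian normal subgroups of other orders), so the character-degree route is the one I would commit to.
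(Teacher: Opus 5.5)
\textbf{Case (b) has a genuine gap.} Your case (a) is fine and agrees with the paper: all degrees are $p$-powers, so Isaacs--Passman gives a normal abelian $p$-complement. Case (b), however, rests on a false lemma. It is \emph{not} true that every nonabelian simple group has an irreducible character whose degree is divisible by three distinct primes. Already $\mathbf{d}(A_5)=\{1,3,4,5\}$, $\mathbf{d}(\operatorname{PSL}(2,7))=\{1,3,6,7,8\}$ and $\mathbf{d}(\operatorname{PSL}(2,8))=\{1,7,8,9\}$. So if the nonabelian chief factor of $G$ is a power of $A_5$, there is no $\theta$ to choose, and your Clifford argument yields no contradiction.

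What is true is weaker: at least three primes each divide \emph{some} degree of a nonabelian simple group $S$. If $r\mid |S|$ divided no element of $\mathbf{d}(S)$, It\^o--Michler would give a normal abelian Sylow $r$-subgroup of $S$, which is impossible. Also, $|S|$ has at least three prime divisors by Burnside's $p^aq^b$ theorem. Your chief-factor argument can be repaired with this input. Use a separate $\theta_r\in\operatorname{Irr}(S)$ for each of three primes $r$; any $\chi\in\operatorname{Irr}(G/N)$ lying over $\theta_r\otimes\cdots\otimes\theta_r$ then has degree divisible by $r$. This contradicts the fact that all degrees divide $p^nq^m$.

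Once It\^o--Michler is on the table, though, the Burnside route you dismissed works directly, and it is the paper's proof. You never need $|G|$ itself to be a $\{p,q\}$-number. For each prime $r\mid |G|$ with $r\notin\{p,q\}$, $r$ divides no element of $\mathbf{d}(G)$, so $G$ has a normal abelian Sylow $r$-subgroup $P_r$. These subgroups are normal of pairwise coprime orders, so they commute and intersect trivially, and their product $A$ is a normal abelian subgroup. Then $G/A$ is a $\{p,q\}$-group, hence solvable by Burnside, and so $G$ is solvable. The ``abelian normal subgroups of other orders'' you worried about are exactly what this step quotients out.
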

	\begin{proof} First, suppose that $\deg(\psi)=p^{n}$. In this case, by the divisibility hypothesis, $p\mid \deg(\chi)$ for every irreducible non-linear character $\chi$ of $G$. Then, by \cite[Theorem 6.9]{Isaacs1976}, it follows that $G$ has a normal abelian $p$-complement; that is, there exists a normal abelian subgroup $N$ of $G$ such that $p\nmid |N|$ and $[G:N]=p^{\ell}$, for some $\ell\in \mathbb{N}$. Since $G/N$ is a $p$-group and $N$ is abelian, both are solvable groups. Therefore, $G$ is solvable.
		
		Now, suppose that $\deg(\psi)=p^{n}q^{m}$. Since $\psi$ is a peak character, it follows that for any irreducible character $\chi$ of $G$, there exist $r_{1}, r_{2}\in \mathbb{Z}_{\geq 0}$ such that $\deg(\chi)=p^{r_{1}}q^{r_{2}}$. Thus, $\mathbf{d}(G)\subseteq \{p^{r}q^{s}\mid r,s\in \mathbb{Z}_{\geq 0}\}$. Since the degrees of the irreducible characters of $G$ divide $|G|$, it follows that $|G|$ is of the form $|G|=p^{\ell_{1}}q^{\ell_{2}}m$, for some $\ell_{1}, \ell_{2}\in \mathbb{N}$ and $m\in \mathbb{N}$ coprime to $p$ and $q$. 
		
		Let $p_{1}$ be a prime number such that $m=p_{1}^{\ell_{3}}m'$, where $\ell_{3}\in \mathbb{N}$ and $m'\in \mathbb{N}$ coprime to $p_1$. Since $p_{1}$ does not divide any element of $\mathbf{d}(G)$, by Itô--Michler's theorem \cite[Theorem 7.1]{Navarro2018}, $G$ contains a normal abelian Sylow $p_{1}$-subgroup $P_{1}$. Clearly, $\mathbf{d}(G/P_{1})\subseteq \mathbf{d}(G)$. Repeating this process, if necessary, we may assume without loss of generality that $|G|=p^{\ell_{1}}q^{\ell_{2}}p_{1}^{\ell_{3}}$. Hence, $G/P_{1}$ is a group of order $p^{\ell_{1}}q^{\ell_{2}}$, and by Burnside's Theorem \cite[Theorem 7.8]{Isaacs2008}, it follows that $G/P_{1}$ is solvable. Therefore, $G$ is also solvable.
	\end{proof}

	For the next result we recall that a group $G$ has a \emph{normal $p$-complement}, where $p$ is a prime number, if  there exists a normal subgroup $N$ of $G$ such that $p\nmid |N|$ and $[G:N]=p^{k}$, for some $k\in \mathbb{N}$.
	
	\begin{Lemma}
		\label{normal.p.complement}
		
		Let $G$	be a peak group and $\psi$ be a peak character of $G$ such that $\deg(\psi)=p_{1}^{n_{1}}p_{2}^{n_{2}}p_{3}^{n_{3}}$, where $p_{1}$, $p_{2}$ and $p_{3}$ are distinct prime numbers and $n_{1}$, $n_{2}$, $n_{3}\in \mathbb{N}$. Suppose $G$ has a normal $p_{i}$-complement, for some $i\in \{1,2,3\}$. Then, $G$ is solvable. 
	\end{Lemma}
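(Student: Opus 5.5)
Let $N$ be a normal $p_i$-complement of $G$, so that $N\trianglelefteq G$, $p_i\nmid |N|$, and $G/N$ is a $p_i$-group. A $p_i$-group is nilpotent, hence solvable, so it is enough to prove that $N$ is solvable. We will not show that $N$ is a peak group. Instead we will show that the character degrees of $N$ involve at most the two primes $\{p_1,p_2,p_3\}\setminus\{p_i\}$, and then run the same argument as in the second half of the proof of Lemma~\ref{two.cases}.

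\textbf{Step 1: degrees of $N$.} Let $\theta\in\operatorname{Irr}(N)$. Choose an irreducible constituent $\chi$ of $\theta^{G}$. By Frobenius reciprocity, $\theta$ is a constituent of $\chi_N$, and by Clifford theory $\theta(1)$ divides $\chi(1)$. Since $G$ is peak, $\chi(1)$ divides $\deg(\psi)=p_1^{n_1}p_2^{n_2}p_3^{n_3}$. Moreover $\theta(1)$ divides $|N|$, which is coprime to $p_i$. Writing $\{p_j,p_k\}=\{p_1,p_2,p_3\}\setminus\{p_i\}$, this gives $\theta(1)=p_j^{a}p_k^{b}$ for some $a,b\ge 0$.

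\textbf{Step 2: the other primes of $|N|$.} Let $r$ be a prime dividing $|N|$ with $r\notin\{p_j,p_k\}$; note that $r\neq p_i$ automatically. By Step 1, $r$ divides no element of $\mathbf{d}(N)$. The Itô--Michler theorem \cite[Theorem 7.1]{Navarro2018} then says $N$ has a normal abelian Sylow $r$-subgroup $R$. Since $\mathbf{d}(N/R)\subseteq\mathbf{d}(N)$, we can repeat the argument in $N/R$ for each remaining such prime, exactly as in Lemma~\ref{two.cases}. This produces a normal series of $N$ whose factors are abelian, and whose top factor has order $p_j^{\alpha}p_k^{\beta}$. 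Alternatively, one can argue by induction on $|N|$: $R$ is abelian and $N/R$ again satisfies the Step 1 degree condition.

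\textbf{Step 3: conclusion.} The top factor has order divisible only by $p_j$ and $p_k$, so it is solvable by Burnside's $p^aq^b$ theorem \cite[Theorem 7.8]{Isaacs2008}. Hence $N$ is solvable, being built from abelian factors and a solvable factor, and therefore $G$ is solvable.

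The main obstacle is Step 1: making sure the peak condition on $G$ passes to degree restrictions on $N$ even though $N$ itself need not be peak. The Clifford divisibility $\theta(1)\mid\chi(1)$ for normal $N$ handles this, and it is the only genuinely new ingredient compared with Lemma~\ref{two.cases}. In the iteration of Step 2 one must also check that each quotient still satisfies the degree hypothesis; this holds because quotients only lose character degrees.
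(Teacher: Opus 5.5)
Your proof is correct, but it takes a slightly different route from the paper's.

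\textbf{The paper's route.} It first shows that the normal $p_i$-complement $N$ is itself a peak group, and then applies Lemma~\ref{two.cases} to $N$ as a black box. To get there it writes $\chi(1)=k't'\,\nu_\chi(1)$ by Clifford's theorem. It then uses the stronger fact, cited from Navarro (Theorem 5.12), that $\chi(1)/\nu_\chi(1)$ divides $[G:N]$ and is therefore a power of $p_i$. This makes $\nu_\psi(1)$ exactly the $p_i'$-part of $\psi(1)$, so every $\nu_\chi(1)$ divides it.

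\textbf{Your route.} You use only the elementary divisibility $\theta(1)\mid\chi(1)$ together with $p_i\nmid\theta(1)$. This gives $\mathbf{d}(N)\subseteq\{p_j^a p_k^b\}$, though not that the top degree $p_j^{n_j}p_k^{n_k}$ is attained. You then rerun the It\^o--Michler/Burnside argument directly on $N$. This suffices because the proof of Lemma~\ref{two.cases} uses the peak hypothesis only through the set of primes dividing character degrees.

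\textbf{Comparison.} Your version is more economical, since it does not need the $[G:N]$-divisibility of $\chi(1)/\theta(1)$. It also handles uniformly the degenerate cases where an exponent of $\nu_\psi(1)$ is $0$. The paper's appeal to Lemma~\ref{two.cases}, which is stated for exponents in $\mathbb{N}$, covers those cases only implicitly. In exchange, the paper's route yields extra structural information: a normal $p_i$-complement of a peak group is again a peak group, with peak degree equal to the $p_i'$-part of $\deg(\psi)$. Both arguments rest on the same deep input, the It\^o--Michler theorem.
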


	\begin{proof} Without loss of generality we assume $i=1$ and let $N$ be a normal $p_{1}$-complement of $G$.  By Clifford's theorem \cite[Theorem 6.2]{Isaacs1976} there exists an irreducible representation $\nu_{\psi}$ of $N$ such that the restriction $\psi|_N$ of $\psi$ to $N$ can be written in the form
		\[
		\psi|_N=k\sum_{i=1}^{t}\nu_{i}
		\]
		where $\nu_{\psi}=\nu_{1}$,\dots, $\nu_{t}$ are the distinct conjugates of $\nu_{\psi}$ in $G$,  $k=k(\nu_{\psi})\in \mathbb{N}$, and $t=[G:I_{G}(\nu_{\psi})]$. Hence
		\[
		\deg(\psi)=kt\deg(\nu_{\psi}). 
		\]
		Similarly, given any irreducible character $\chi$ of $G$ there exists an irreducible representation $\nu_{\chi}$ of $N$ and $k'$,$t'\in \mathbb{N}$ such that 
		\[
		\deg(\chi)=k't'\deg(\nu_{\chi}).
		\]
		By \cite[Theorem 5.12]{Navarro2018} it follows that $kt$ and $k't'$ divide $[G:N]$, i.e., they are powers of $p_{1}$. Thus, since $\deg(\chi)\mid \deg(\psi)$ and $p_{1}\nmid |N|$ we conclude $\deg(\nu_{\chi})\mid \deg(\nu_{\psi})$.   Now, by Frobenius reciprocity \cite[Lemma 5.2]{Isaacs1976} every irreducible representation of $N$ is equivalent to a direct summand of the restriction to $N$ of some irreducible representation of $G$.  It now follows that for every irreducible representation $\varphi$ of $N$, we have $\deg(\varphi)\mid \deg(\nu_{\psi})$. Hence, $N$ is a peak group.  On the other hand, since $\deg(\nu_{\psi})\mid |N|$ and $\deg(\nu_{\psi})\mid \deg(\psi)$ we conclude $\deg(\nu_{\psi})=p_{2}^{\ell_{2}}p_{3}^{\ell_{3}}$, for some $\ell_{2}$, $\ell_{3}\in \mathbb{Z}_{\geq 0}$. Thus, by Lemma \ref{two.cases} we conclude $N$ is solvable, and since $G/N$ is a $p_{1}$-group, we conclude that $G$ is solvable.
	\end{proof}
	
	The lemmas \ref{two.cases} and \ref{normal.p.complement}, together with Theorem \ref{main.theorem}, yield the following result.
	
	\begin{Proposition} \label{p3conditions}
		\label{criterion}
		A peak group $G$  with a peak character $\psi$ is solvable provided that one of the following is true.
		\begin{enumerate}
			\item[(a)] $\deg(\psi)=p^{n}q^{m}$, where $p$ and $q$ are distinct prime numbers and $n$, $m\in \mathbb{Z}_{\geq 0}$;
			\item[(b)] $\deg(\psi)=p_{1}^{n_{1}}p_{2}^{n_{2}}p_{3}^{n_{3}}$  where $p_{1}$, $p_{2}$ and $p_{3}$ are distinct prime numbers and $n_{1}$, $n_{2}$, $n_{3}\in \mathbb{N}$, and $G$ has a normal $p_{i}$-complement for some $i\in \{1,2,3\}$. 
			\item[(b')] $\deg(\psi)=p_{1}^{n_{1}}p_{2}^{n_{2}}p_{3}^{n_{3}}$  where $p_{1}$, $p_{2}$ and $p_{3}$ are distinct prime numbers and $n_{1}$, $n_{2}$, $n_{3}\in \mathbb{N}$, and there exists $i\in \{1,2,3\}$ such that $p_{i}\mid \deg(\chi)$ for any nonlinear irreducible character $\chi$ of $G$.  
		\end{enumerate}	
	\end{Proposition}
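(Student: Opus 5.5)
We treat the three conditions in turn, each by reduction to Lemma \ref{two.cases} or Lemma \ref{normal.p.complement}.

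\emph{Case (a).} If $n=m=0$, then $\deg(\psi)=1$. Every irreducible degree divides $1$, so $G$ is abelian and hence solvable. If exactly one of $n,m$ is zero, then $\deg(\psi)$ is a prime power $p^{n}$ or $q^{m}$, and Lemma \ref{two.cases}(a) applies. If both are positive, Lemma \ref{two.cases}(b) applies directly. The only care needed is this bookkeeping, since the statement allows exponents in $\mathbb{Z}_{\geq 0}$ while Lemma \ref{two.cases} requires positive exponents.

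\emph{Case (b).} This is exactly Lemma \ref{normal.p.complement}, and nothing further is needed.

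\emph{Case (b').} We reduce to (b) by producing a normal $p_i$-complement. Suppose $p_i$ divides $\deg(\chi)$ for every nonlinear irreducible character $\chi$ of $G$. Then \cite[Theorem 6.9]{Isaacs1976}, already used in the proof of Lemma \ref{two.cases}(a), gives that $G$ has a normal abelian $p_i$-complement $N$. Conclusion (b) then yields solvability of $G$.

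Alternatively, we can argue directly: $N$ is abelian and $G/N$ is a $p_i$-group, so both are solvable and hence $G$ is solvable. The main (mild) obstacle is only confirming that the cited theorem of Isaacs has exactly this hypothesis, namely that $p$ divides every nonlinear irreducible degree. Granting it, the argument is immediate.

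The mention of Theorem \ref{main.theorem} is needed only to interpret the peak hypothesis; solvability itself follows from the lemmas above.
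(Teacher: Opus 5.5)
Your treatment of (a) and (b) is fine, and the zero-exponent bookkeeping in (a) is a small point the paper passes over. The error is in (b'). You cannot conclude from the hypothesis that $p_i$ divides $\chi(1)$ for every nonlinear $\chi$ that $G$ has a normal \emph{abelian} $p_i$-complement. The result with exactly this hypothesis is Thompson's theorem \cite[Corollary 12.2]{Isaacs1976}, which the paper invokes, and it gives only a normal $p_i$-complement $N$.

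In the proof of Lemma \ref{two.cases}(a), abelianness holds for a different reason. There every irreducible degree of $G$ is a power of $p$. By Clifford's theorem, each irreducible character of $N$ then has degree dividing a power of $p$ and coprime to $p$, so it is linear. Under (b') this argument is unavailable, and your claim is in fact always false. If $N$ were abelian, It\^o's theorem ($\chi(1)$ divides $[G:A]$ for every abelian normal subgroup $A$) would force every irreducible degree, in particular $\deg(\psi)$, to be a power of $p_i$. That contradicts $n_j\geq 1$ for $j\neq i$. For a concrete instance, take the Frobenius group $E\rtimes C_3$, with $E$ the Heisenberg group of order $7^3$ and $C_3$ acting fixed-point-freely. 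Its degree set is $\{1,3,21\}$, so $3$ divides every nonlinear degree, yet its normal $3$-complement $E$ is nonabelian.

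Consequently your ``direct'' alternative argument is invalid, and it is not a harmless shortcut. It skips exactly the part that needs work: proving that the necessarily nonabelian $p_i'$-group $N$ is solvable. Lemma \ref{normal.p.complement} does this by showing that $N$ is a peak group whose peak degree involves only the other two primes, and then applying Lemma \ref{two.cases}. Your main line is salvageable: replace the citation with Thompson's theorem, drop the word ``abelian'', and apply (b). That is precisely the paper's proof.
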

	
	\begin{proof}
		Only $(\mathrm{b}^\prime)$ needs to be proven. So, let $G$ be a finite group, and let $i\in \{1,2,3\}$ such that $p_{i}\mid \deg(\chi)$ for any nonlinear irreducible character of $G$.  By Thompson's theorem \cite[Corollary 12.2]{Isaacs1976}, $G$ has a normal $p_{i}$-complement $N$. It remains to apply (b).
	\end{proof}
	
	\subsection{Frobenius Groups}

	\begin{Definition} Let $G$ be a finite group with a neutral element $e\in G$ and let $H$ be a subgroup of $G$ with $\{e\}\subsetneq H\subsetneq G$ such that for any $g\in G\setminus H$ 
		\[
		H\cap gHg^{-1}=\{e\}.
		\]
		Then $H$ is called a \emph{Frobenius complement } in $G$ and $G$ is called a \emph{Frobenius group}.
	\end{Definition}
	
	Given a Frobenius group $G$, by \cite[Theorem 7.2]{Isaacs1976} there exists a normal subgroup $N$ of $G$ such that $NH=G$ and $H\cap N=\{e\}$, i.e., $G\cong N\rtimes H$, the semidirect product of $N$ and $H$. In this case, $N$ is called the \emph{Frobenius kernel} of $G$. Consider the following types of  characters of $G$:
	
	\begin{enumerate}
		\item[Type 1.] The irreducible characters of $G$ with $N$ contained in their kernels (these are in one-to-one correspondence with the irreducible characters of $H$, since $G/N\cong H$);
		\item[Type 2.] Given a nontrivial irreducible character $\lambda$ of $N$,  consider the induced character $\lambda^{G}$ of $G$. We notice that $\deg(\lambda^{G})=|H|\deg(\lambda)$.
	\end{enumerate}
	
	By \cite[Theorem 13.8]{Dornhoff1971}, the irreducible characters of Type 1 and Type 2 constitute all the irreducible characters of $G$. Thus, we conclude that
	\begin{equation}
		\label{eq.Frobenius.group'}
		\mathbf{d}(G)=\mathbf{d}(H)\cup \{|H|\deg(\lambda)\mid \lambda\quad \text{is a nontrivial irreducible character of $N$} \}.
	\end{equation}
	In particular, if $N$ is abelian, then
	\begin{equation}
		\label{eq.Frobenius.group}
		\mathbf{d}(G)=\mathbf{d}(H)\cup \{|H|\}.
	\end{equation}

	\begin{Corollary}
		\label{Frobenius.group}
		Any Frobenius group is a peak group. 
	\end{Corollary}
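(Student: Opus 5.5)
The plan is to read off $\mathbf{d}(G)$ from formula (\ref{eq.Frobenius.group'}) and to exhibit an explicit greatest element. Write $G\cong N\rtimes H$ with Frobenius kernel $N$ and complement $H$. Every element of $\mathbf{d}(G)$ is either the degree of an irreducible character of $H$, or of the form $|H|\deg(\lambda)$ with $\lambda$ a nontrivial irreducible character of $N$. So it suffices to find a nontrivial $\lambda_0\in\operatorname{Irr}(N)$ such that $|H|\deg(\lambda_0)$ is divisible by every element of both sets.

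The first family is easy. For $\chi\in\operatorname{Irr}(H)$, the degree $\deg(\chi)$ divides $|H|$ by the classical divisibility theorem. Hence $\deg(\chi)$ divides $|H|\deg(\lambda)$ for any $\lambda$. In particular, when $N$ is abelian, (\ref{eq.Frobenius.group}) gives $\mathbf{d}(G)=\mathbf{d}(H)\cup\{|H|\}$, and $|H|$ is the peak. Note that $N\neq\{e\}$, so nontrivial characters of $N$ exist and $|H|$ really occurs in $\mathbf{d}(G)$.

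For general $N$, I will invoke Thompson's theorem that the Frobenius kernel is nilpotent (see e.g.\ \cite{Isaacs1976}). Then Proposition~\ref{pNG} shows that $N$ is a peak group, so there is $\lambda_0\in\operatorname{Irr}(N)$ with $\deg(\lambda)\mid\deg(\lambda_0)$ for all $\lambda\in\operatorname{Irr}(N)$. I must check that $\lambda_0$ can be taken nontrivial. If $\deg(\lambda_0)=1$, then $N$ is abelian, and any nontrivial linear character serves, since $N\neq\{e\}$. Otherwise $\lambda_0$ is nonlinear and hence automatically nontrivial. Thus $|H|\deg(\lambda_0)\in\mathbf{d}(G)$. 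For every nontrivial $\lambda$ we have $|H|\deg(\lambda)\mid|H|\deg(\lambda_0)$, and the elements of $\mathbf{d}(H)$ divide $|H|$, so $|H|\deg(\lambda_0)$ is the greatest element of $\mathbf{d}(G)$ under divisibility.

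The only nontrivial input, and hence the main obstacle, is the nilpotency of the Frobenius kernel. This is a deep theorem that I would simply cite rather than reprove. Everything else follows from (\ref{eq.Frobenius.group'}) and Proposition~\ref{pNG}.
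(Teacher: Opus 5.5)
Your proof is correct and follows the paper's argument: nilpotency of the Frobenius kernel (the paper cites this as $N=\operatorname{Fit}(G)$ from Huppert), then Proposition~\ref{pNG} to obtain a peak character of $N$, then formula (\ref{eq.Frobenius.group'}) together with $\deg(\chi)\mid |H|$ for $\chi\in\operatorname{Irr}(H)$. Your explicit check that $\lambda_0$ can be chosen nontrivial is a worthwhile addition, because the paper simply asserts that $\nu^{G}$ is irreducible, and this fails if the peak character $\nu$ chosen for an abelian kernel happens to be the trivial one.
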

	\begin{proof}Denote by $H$ the Frobenius complement of $G$ and by $N$ the Frobenius kernel of $G$. By \cite[Proposition 16.13(b)]{Huppert1998}, we have $N=\operatorname{Fit}(G)$, where $\operatorname{Fit}(G)$ denotes the Fitting subgroup of $G$, that is, the unique largest normal nilpotent subgroup of $G$. In particular, $N$ is nilpotent, and by Proposition \ref{pNG}, $N$ is a peak group with a peak character $\nu$ of $N$. Consider the induced character $\nu^{G}$ of $G$, which is irreducible since $G$ is a Frobenius group. Let $\chi$ be an arbitrary irreducible character of $G$. By Equation~(\ref{eq.Frobenius.group'}), either $\deg(\chi)=\deg(\varphi)$, where $\varphi$ is an irreducible character of $H$, or $\deg(\chi)=|H|\deg(\lambda)$, where $\lambda$ is an irreducible character of $N$. In the first case, since $\deg(\chi)\mid |H|$, we have
		\[
		\deg(\chi)\mid |H|m
		\]
		for every $m\in \mathbf{d}(N)$. In particular, $\deg(\chi)\mid |H|\deg(\nu)=\deg(\nu^{G})$. 
		
		On the other hand, if $\deg(\chi)=|H|\deg(\lambda)$, then, since $N$ is a peak group, we have
		\[
		\deg(\lambda)\mid \deg(\nu),
		\]
		and therefore
		\[
		\deg(\chi)=|H|\deg(\lambda)\mid |H|\deg(\nu)=\deg(\nu^{G}).
		\]
		By the arbitrariness of $\chi$, we conclude that  $\nu^{G}$ is a peak character of $G$. Hence, $G$ is a peak group. 
	\end{proof}

	Now, consider the special linear group $H:=\operatorname{SL}(2,5)=\{a\in M_{2}(\mathbb{K}_{5})\mid \det(a)=1\}$, and denote its identity element by $e$. It is well known that $H$ has the following presentation:
	\[
	H=\langle x,y,z\mid x^{3}=y^{5}=z^{2}=1,\quad xz=zx,\quad yz=zy,\quad (xy)^{2}=z \rangle.
	\]
	Using the matrices
	\[
	x:=\begin{pmatrix}
		7 & 1\\
		9 & 3
	\end{pmatrix}, \quad y:=\begin{pmatrix}
		3 & 0\\
		0 & 4
	\end{pmatrix},\quad z:=\begin{pmatrix}
		10 & 0\\
		0 & 10
	\end{pmatrix} \in M_{2}(\mathbb{K}_{11}),
	\]
	we see that $\langle x,y,z\rangle$ is a subgroup of $\operatorname{GL}(2,11)$ isomorphic to $H$. In particular, if $V:=\mathbb{K}_{11}^{2}$, then this defines a representation $\rho\colon H\rightarrow \operatorname{GL}(V)$. As shown in \cite[Proposition 18.5]{Passman1968}, if $\rho(h)v=v$ for some $0\neq v\in V$, then $h=e$. Hence, by \cite[Proposition 16.5]{Huppert1998}, the group $G:=V\rtimes H$ is a Frobenius group with Frobenius complement $H$ and Frobenius kernel $V$, where $e\in G$ denotes the identity element. Thus, by Equation (\ref{eq.Frobenius.group}),
	\[
	\mathbf{d}(G)=\mathbf{d}(H)\cup \{|H|\},\quad |H|=|\operatorname{SL}(2,5)|=120.
	\]

	\noindent\underline{\textbf{Claim.}} $G$ has no normal $p$-complements.
	
	\begin{proof}
		Indeed,  suppose that there exists a normal $p$-complement $N$ of $G$. Observe that
		\[
		|G|=|V||H|=2^{3}\cdot 3\cdot 5\cdot 11^{2}.
		\]
		Hence, $p\in \{2,3,5,11\}$. By \cite[Proposition 16.13(a)]{Huppert1998}, either $N$ is a subgroup of $V$, or $V$ is a subgroup of $N$.
		
		If $N$ is a subgroup of $V$, then $|N|\in \{1,11,11^{2}\}$, and in particular $p\neq 11$. However, in either case, $[G:N]$ is not a power of $p$.
		
		Now suppose that $V$ is a subgroup of $N$. In this case, by the Correspondence Theorem, $N/V$ is isomorphic to a normal subgroup of $H$. It is well known that $H$ has exactly three normal subgroups, namely $H$, $Z(H)=\{e,-e\}$, and $\{e\}$. Thus, we have $[N:V]=120$, $[N:V]=2$, or $[N:V]=1$, respectively. Since
		\[
		[G:N]= \dfrac{|G|}{|V|[N:V]},
		\]
		it follows that $[G:N]=1$, $[G:N]=60$, or $[G:N]=120$, respectively. None of these indices is a power of a prime. Therefore, $G$  has no normal $p$-complements.
	\end{proof}

	These calculations show that, although $G$ is a peak group with a peak character $\psi$ satisfying
	\[
	\deg(\psi)=|H|=2^{3}\cdot 3\cdot 5,
	\]
	Proposition \ref{criterion} cannot be applied to $G$, since it has no normal $p$-complements. Moreover, $G$ is not solvable, because $H$ is a non-solvable subgroup of $G$.
	
	We summarize the above discussion in the following proposition.	
	
	\begin{Proposition} Consider the Frobenius group $G=V\rtimes H$, with neutral element $e\in G$, where $V:=\mathbb{K}_{11}^{2}$ and $H:=\operatorname{SL}(2,5)$. Then, we have: 
		\begin{enumerate}
			\item[(a)] $G$ has no normal $p$-complements.
			\item[(b)] $G$ is not a solvable group.
			\item[(c)] $KG$ admits a regular quantum commutative decomposition.
		\end{enumerate}
	\end{Proposition}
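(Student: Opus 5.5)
The proposition collects three facts already established in the discussion preceding it, so the plan is to assemble them in order.

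For (a), I will cite the Claim proved above. Its argument runs as follows. Any normal subgroup $N$ of the Frobenius group $G$ is comparable with the kernel $V$, by \cite[Proposition 16.13(a)]{Huppert1998}. If $N\subseteq V$, then $[G:N]$ is divisible by $120$, so it cannot be a prime power. If $V\subseteq N$, then $N/V$ is a normal subgroup of $\operatorname{SL}(2,5)$. These are only $\{e\}$, $Z(H)$ and $H$, which give $[G:N]\in\{120,60,1\}$. The index $1$ is excluded because it would require $p\nmid |G|$ while $p$ divides $|G|$. In no case is $[G:N]$ a nontrivial power of a prime $p$ with $p\nmid|N|$. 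The only point needing care is the case $N\subseteq V$: one must note that $|G/N|$ is divisible by $120=2^3\cdot 3\cdot 5$, so it is not a prime power.

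For (b), I will note that $H\cong\operatorname{SL}(2,5)$ embeds in $G$ as the complement. Subgroups of solvable groups are solvable, but $\operatorname{SL}(2,5)$ is not: its quotient by the center is $\operatorname{PSL}(2,5)\cong A_5$, which is simple and nonabelian. Hence $G$ is not solvable.

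For (c), the key input is that $G$ is a Frobenius group. This uses the fixed-point-free action of $H$ on $V=\mathbb{K}_{11}^2$ \cite[Proposition 18.5]{Passman1968} together with \cite[Proposition 16.5]{Huppert1998}. Corollary~\ref{Frobenius.group} then says $G$ is a peak group. One can also see this directly from Equation~(\ref{eq.Frobenius.group}), since $V$ is abelian: $\mathbf{d}(G)=\mathbf{d}(H)\cup\{120\}$, and every degree in $\mathbf{d}(H)$ divides $|H|=120$. Proposition~\ref{criterion.group.algebra}, which follows from Theorem~\ref{main.theorem}, then gives a regular quantum commutative decomposition of $KG$. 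No step is a real obstacle, since all ingredients are already in place; the index bookkeeping in (a) is the only place requiring any care.
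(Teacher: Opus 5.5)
Your proposal is correct and follows the paper's own argument. Part (a) is the paper's Claim, using the Frobenius-kernel dichotomy from Huppert and the three normal subgroups of $\operatorname{SL}(2,5)$. Part (b) uses the non-solvable complement $H$. Part (c) combines the Frobenius structure, Equation~(\ref{eq.Frobenius.group}) (equivalently Corollary~\ref{Frobenius.group}) and Proposition~\ref{criterion.group.algebra}. You only add two small justifications the paper leaves implicit: that $\operatorname{PSL}(2,5)\cong A_5$ is simple, and why index $1$ is excluded.
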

	
	The essential reason why $G:=V\rtimes H$ is not solvable and does not satisfy conditions (a) and (b) of the above proposition is the presence of $\operatorname{SL}(2,5)$. Therefore,  motivated by Corollary \ref{Frobenius.group} and Zassenhaus's Theorem \cite[Theorem 18.6]{Passman1968}, which essentially states that $\operatorname{SL}(2,5)$ is the unique nonsolvable Frobenius complement, we propose the following conjecture.
	
	\begin{Conjecture} Let $G$ be a peak group. Then, exactly one of the following holds:
		\begin{enumerate}
			\item[(1)] $G$ is a solvable group.
			\item[(2)] $G$ is a Frobenius group with Frobenius complement $H$ such that there exists a normal subgroup $H_{0}$ of $H/\operatorname{Fit}(H)$ such that  $H_{0}\cong \operatorname{SL}(2,5)\times T$, where $T$ is a group satisfying the following conditions:
			\begin{enumerate}
				\item[(2.1)] Every Sylow subgroup of $T$ is cyclic. 
				\item[(2.2)] $p\nmid |T|$, for any prime number $p\in \{2,3,5\}$.
			\end{enumerate}
		\end{enumerate}
	\end{Conjecture}



    

\end{document}